\newtheorem{thm}{Theorem}[section]
\newtheorem{lemma}[thm]{Lemma}
\theoremstyle{definition}
\newtheorem{exmp}[thm]{Example}
\theoremstyle{remark}
\let\c@equation\c@thm
\numberwithin{equation}{section}
\title[Hypersurfaces of Prescribed Curvature]{Smooth Solutions to Asymptotic Plateau Type Problem in Hyperbolic Space}
\author{Zhenan Sui}
\address{Institute for Advanced Study in Mathematics of HIT, Harbin Institute of Technology, Harbin, China}
\email{sui.4@osu.edu}
\author{Wei Sun}
\address{Institute of Mathematical Sciences, ShanghaiTech University, Shanghai, China}
\email{sunwei@shanghaitech.edu.cn}
\begin{document}

\begin{abstract}
We investigate on the existence of smooth complete hypersurface with prescribed Weingarten curvature and asymptotic boundary at infinity in hyperbolic space under the assumption that there exists an asymptotic subsolution. We give an affirmative answer for the case $k = n$ when the asymptotic boundary $\Gamma$ bounds a uniformly convex domain, and for $k < n$ when $\Gamma$ bounds a disk, utilizing Pogorelov type interior second order estimate. Our result complements our previous work \cite{Sui2019, Sui-Sun}, and generalizes the asymptotic Plateau type problem to non-constant prescribed curvature case. 
\end{abstract}

\subjclass[2010]{Primary 53C21; Secondary 35J65, 58J32}

\maketitle


\section {\large Introduction}

\vspace{4mm}

In this paper, we shall continue our study of finding smooth hypersurfaces to asymptotic Plateau type problem in hyperbolic space, which extends our previous work \cite{Sui2019, Sui-Sun}. As before, we take the half space model for hyperbolic space
\[\mathbb{H}^{n+1} = \{ (x, x_{n+1}) \in \mathbb{R}^{n+1} \big\vert x_{n+1} > 0\}, \]
endowed with the metric
\[ d s^2 = \frac{1}{x_{n+1}^2} \sum_{i = 1}^{n+1} d x_i^2. \]
Let $\psi$ be a given smooth positive function defined in $\mathbb{H}^{n + 1}$, and $\Gamma = \{\Gamma_1, \ldots, \Gamma_m\}$ be a disjoint collection of smooth closed $(n - 1)$ dimensional submanifolds at
\[ \partial_{\infty} \mathbb{H}^{n+1} := \mathbb{R}^n \times \{0\} \cong \mathbb{R}^n. \]
We want to seek a smooth complete connected admissible vertical graph $\Sigma = \{ (x, u(x) ) | x \in \Omega\}$ satisfying
\begin{equation} \label{eqn9}
\left\{ \begin{aligned}
f ( \kappa [ u ] ) =  & \, \sigma_k^{\frac{1}{k}} ( \kappa ) = \psi(x, u) \quad & \mbox{in} \,\, \Omega, \\
u  = &  \, 0 \quad & \mbox{on} \,\, \Gamma,
\end{aligned} \right.
\end{equation}
where $\kappa = (\kappa_1, \ldots, \kappa_n)$ are the hyperbolic principal curvatures of $\Sigma$ with respect to the upward normal,
\[\sigma_k (\kappa) = \sum_{1 \leq i_1 < \ldots < i_k \leq n} \kappa_{i_1} \cdots \kappa_{i_k} \]
is the $k$th-Weingarten curvature defined on $k$-th G\r arding's cone
\[\Gamma_k = \{ \kappa \in \mathbb{R}^n \vert \sigma_j(\kappa) > 0,\, j = 1, \ldots, k \}, \]
and $\Omega$ is the bounded domain enclosed by $\Gamma$ on $\mathbb{R}^n$.
$\Sigma$ is said to be admissible if $\kappa \in \Gamma_k$. When $k = 1, 2, n$, $\sigma_k(\kappa)$ is the so-called mean curvature, scalar curvature and Gauss curvature. In the special case $k = n$, we shall use ``strictly locally convex'' instead of ``admissible'' when $\kappa \in \Gamma_n$. Besides, we say that $\Sigma$ is locally convex if $\kappa \in \overline{\Gamma_n}$.

Asymptotic Plateau type problem \eqref{eqn9} is more difficult than common Dirichlet problems due to the singularity at $\Gamma$. Because of this, it resembles those problems on noncompact domains.
When $\psi = \sigma \in (0, 1)$ is a prescribed constant, Guan, Spruck, Szapiel and Xiao \cite{GS00, GSS09, GS10, GS11, GSX14} developed some effective technique for problem \eqref{eqn9}. They analyze the approximating
Dirichlet problem
\begin{equation} \label{eqn11}
\left\{ \begin{aligned} f ( \kappa [ u ] ) =  & \, \sigma \quad & \mbox{in} \,\, \Omega, \\
u   = & \, \epsilon \quad & \mbox{on} \,\, \Gamma,
\end{aligned} \right.
\end{equation}
where $f$ is in general setting, satisfies certain assumptions, and is normalized, and $\epsilon$ is a small positive constant. Because $\psi = \sigma \in (0, 1)$ is a constant, there is a natural subsolution $\underline{u} = \epsilon$ during the continuity process related to \eqref{eqn11}, and the estimates to \eqref{eqn11} can be independent of $\epsilon$.

When $\psi$ is not a constant, we no longer have a natural subsolution during the continuity process, and most of the methods in \cite{GS00, GSS09, GS10, GS11, GSX14} do not work in this generality.  In \cite{Sui2019, Sui-Sun}, we started a new method:
assume that there exists an asymptotic subsolution to \eqref{eqn9}, namely, there exists an admissible $\underline{u} \in C^4(\Omega) \cap C^0(\overline{\Omega})$ such that
\begin{equation} \label{eqn8}
\left\{ \begin{aligned}
f(\kappa [ \underline{u} ]) \geq  & \, \psi(x, \underline{u}) \quad & \mbox{in} \,\, \Omega, \\
\underline{u}  = & \, 0 \quad  & \mbox{on} \,\, \Gamma.
\end{aligned} \right.
\end{equation}
Then we make use of the level set of $\underline{u}$ to construct approximating Dirichlet problem:
\begin{equation} \label{eqn10}
\left\{\begin{aligned}
f ( \kappa [ u ] ) =  & \, \psi(x, u) \quad & \mbox{in} \,\, \Omega_{\epsilon}, \\
u  = & \, \epsilon \quad & \mbox{on} \,\, \Gamma_{\epsilon},
\end{aligned} \right.
\end{equation}
where
\[ \Gamma_{\epsilon} =  \{ x \in \Omega \,\big\vert\, \underline{u}(x)  = \epsilon\}, \quad
\Omega_{\epsilon} = \{ x \in \Omega \,\big\vert\, \underline{u}(x)  > \epsilon\}. \]
We also assume that $\Gamma_{\epsilon}$ is a regular boundary of $\Omega_{\epsilon}$ when $\epsilon > 0$ is sufficiently small, namely, $\Gamma_{\epsilon}$ has dimension $n - 1$, $\Gamma_{\epsilon} \in C^4$ and $|D \underline{u}| > 0$ on $\Gamma_{\epsilon}$.
Our idea is like this. The first step is to prove the existence of solutions to \eqref{eqn10}, which needs the establishment of $C^2$ estimates to \eqref{eqn10}. The feature is that we allow the dependence on $\epsilon$. Consequently, techniques for common Dirichlet problems can be applied. The next step is to derive certain uniform interior estimates up to second order which are independent of $\epsilon$ for solutions to \eqref{eqn10}. Combined with Evans-Krylov interior estimates and diagonal process, we can then obtain smooth solutions to \eqref{eqn9}.

We have finished the first step in \cite{Sui2019, Sui-Sun} under certain assumptions. In addition, we obtained uniform interior estimates up to first order for solution $u^{\epsilon}$ to \eqref{eqn10} in these papers, that is,
for any sufficiently small fixed $\epsilon_0 > 0$, we have
\begin{equation} \label{eqn15}
\Vert u^{\epsilon} \Vert_{C^1(\overline{\Omega_{\epsilon_0}})} \leq C, \quad \forall \,\, 0 < \epsilon < \frac{\epsilon_0}{2}.
\end{equation}
Here and throughout this paper, $C$ denotes a positive constant which is independent of $\epsilon$, but may depend on $\epsilon_0$.

The difficulty lies in the establishment of uniform interior estimates of second order for $u^{\epsilon}$. As known from literature, the endeavor to derive interior $C^2$ estimates fails in general context. For example, from the counterexamples of Pogorelov \cite{Po78} and Urbas \cite{Ur90}, we know that the pure interior $C^2$ estimates do not hold when $k \geq 3$. Because of this fact, we investigated prescribed scalar curvature equations in \cite{Sui2019}, using the idea of Guan-Qiu \cite{GQ17}, who established interior curvature estimate for strictly locally convex solutions to prescribed scalar curvature equations in Euclidean space. Subsequently, we studied the next best thing in \cite{Sui-Sun}: to obtain a general existence result for locally Lipschitz continuous weak admissible solutions to \eqref{eqn9}.

Despite the lack of pure interior $C^2$ estimate for general prescribed curvature equations, we seek the possibility of other types of interior $C^2$ estimates to accomplish our second step. Recently, we observe that Pogorelov type interior $C^2$ estimate will suffice to give our desired uniform interior second order estimate with the help of some suitable nested domains (see \eqref{eq2-4} and \eqref{eq2-12}), which are built by means of solution to the associated homogeneous equation
\begin{equation} \label{eqn2}
\left\{ \begin{aligned}
f ( \kappa [ v ] ) =  & \,  0 \quad & \mbox{in} \,\, \Omega, \\
v  = &  \, 0 \quad & \mbox{on} \,\, \Gamma,
\end{aligned} \right.
\end{equation}
as well as the asymptotic subsolution $\underline{u}$.
This observation is motivated by the work of Trudinger-Urbas \cite{Tru-Urb84}, who studied Pogorelov type interior second order estimate for Monge-Amp\`ere equations with general Dirichlet boundary values over uniformly convex domains. However, their estimate can not be directly applied to solution $u^{\epsilon}$ of \eqref{eqn10} with $k = n$, because it relies on the convexity of the domain and the $C^2$ norm of the boundary data (in hyperbolic space, it would also depend on the lower bound of $u$), and consequently may not bring in a uniform interior bound. Our strategy is: rather than derive Pogorelov type interior second order estimate for $u^{\epsilon}$ on $\Omega_{\epsilon}$, we consider this estimate on $\Omega^{\epsilon}_{\epsilon_0}$ (see section 2 and 3 for the definitions), which are uniformly away from $\Gamma$. As a result, we obtain the uniform interior second order estimate \eqref{eq1-15} when $k = n$, which in turn gives the first main result of this paper on prescribed Gauss curvature equations.

\begin{thm} \label{Theorem1}
For $k = n$ and $0 < \psi(x, u) \in C^{\infty} (\mathbb{H}^{n + 1})$, suppose that
there exists a strictly locally convex $\underline{u} \in C^4(\Omega) \cap C^0(\overline{\Omega})$ satisfying \eqref{eqn8} and a locally convex solution $v \in C^{2}(\Omega) \cap C^{0} (\overline{\Omega})$ satisfying
\eqref{eqn2}.
Then there exists a strictly locally convex solution $u \in C^{\infty}(\Omega) \cap C^0(\overline{\Omega})$ with $u \geq \underline{u}$ to asymptotic problem \eqref{eqn9}.
\end{thm}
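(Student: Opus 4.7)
The plan is to implement the two-step strategy outlined in the introduction. First I would, for each sufficiently small $\epsilon > 0$, invoke the existence result from \cite{Sui2019, Sui-Sun} to produce a strictly locally convex solution $u^{\epsilon}$ to the approximating Dirichlet problem \eqref{eqn10} on $\Omega_{\epsilon}$. The comparison principle applied with the subsolution $\underline{u}$, which equals $\epsilon$ on $\Gamma_{\epsilon}$, gives $u^{\epsilon} \geq \underline{u}$ on $\Omega_{\epsilon}$, while the locally convex solution $v$ of the homogeneous equation \eqref{eqn2} serves as a supersolution and provides an upper barrier $u^{\epsilon} \leq v$ on every compactly contained subdomain. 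Combined with the uniform interior $C^1$ bound \eqref{eqn15} from the earlier work, this yields uniform $C^0 \cap C^1$ control of $\{u^{\epsilon}\}$ on every $\overline{\Omega_{\epsilon_0}}$.

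The heart of the argument is a uniform interior $C^2$ estimate of Pogorelov type on the nested subdomain $\Omega^{\epsilon}_{\epsilon_0}$, which is to be constructed from $\underline{u}$ and $v$ so that it is separated from $\Gamma$ uniformly in $\epsilon$ and a natural cutoff vanishes on its boundary. I would consider a test quantity of the schematic form $\log \lambda_{\max}(h) + \varphi(|\nabla u^{\epsilon}|^2) + \eta$, where $\lambda_{\max}(h)$ is the largest eigenvalue of the hyperbolic second fundamental form, $\eta$ is a cutoff built from $v - u^{\epsilon}$ and $u^{\epsilon} - \underline{u}$, and $\varphi$ is a suitable convex auxiliary function. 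At an interior maximum, differentiating $\sigma_n^{1/n}(\kappa) = \psi$ twice, exploiting the concavity of $\sigma_n^{1/n}$ on $\Gamma_n$, and absorbing the resulting gradient terms via $\eta$ and the strict positivity of $\psi$, one obtains a bound on $\lambda_{\max}$ depending only on $\epsilon_0$ and the data $(\underline{u}, v, \psi, \Gamma)$, but not on $\epsilon$.

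With uniform interior $C^2$ estimates in hand, \eqref{eqn10} becomes uniformly elliptic on each compactly contained subdomain, so the Evans-Krylov theorem yields uniform interior $C^{2,\alpha}$ bounds, which standard Schauder theory promotes to uniform $C^{k,\alpha}$ bounds of every order. A diagonal subsequence over an exhaustion $\Omega_{\epsilon_0} \nearrow \Omega$ then extracts a strictly locally convex $u \in C^{\infty}(\Omega)$ solving \eqref{eqn9} in $\Omega$ with $u \geq \underline{u}$. Continuity up to $\Gamma$ follows by squeezing $\underline{u} \leq u \leq v$ and using $\underline{u} = v = 0$ on $\Gamma$.

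The main obstacle I expect is precisely the Pogorelov type estimate of the second step. The classical Trudinger-Urbas argument requires uniform convexity of the domain together with $C^2$ control of the boundary data, both of which degenerate as $\epsilon \to 0$ in \eqref{eqn10} and in the hyperbolic setting would also force dependence on a lower bound for $u$. The novelty is therefore to avoid proving Pogorelov's estimate for $u^{\epsilon}$ on $\Omega_{\epsilon}$ itself and instead establish it on the interior domain $\Omega^{\epsilon}_{\epsilon_0}$, whose geometry is controlled by $\underline{u}$ and $v$ uniformly in $\epsilon$. Getting the hyperbolic correction terms to have the right sign in the maximum principle computation, and choosing the cutoff $\eta$ so that its derivative terms can be absorbed into the good terms coming from $\psi > 0$, will be the main technical step.
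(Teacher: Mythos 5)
Your overall two-step strategy matches the paper exactly: solve the approximating Dirichlet problems \eqref{eqn10}, establish uniform Pogorelov-type interior second order bounds on nested subdomains built from $v$ and $\underline{u}$, then pass to the limit via Evans--Krylov and a diagonal argument, with continuity up to $\Gamma$ by squeezing $\underline{u} \leq u \leq v$. However, the technical execution of the Pogorelov estimate — the crux of the proof — is where you diverge. You propose a test function of the form $\log \lambda_{\max}(h) + \varphi(|\nabla u^{\epsilon}|^2) + \eta$ built directly from the hyperbolic second fundamental form and a cutoff $\eta$ involving $v - u^\epsilon$ and $u^\epsilon - \underline{u}$. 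The paper instead exploits the special structure of $k=n$ by substituting $U = u^2 + |x|^2$, $V = v^2 + |x|^2$, $\underline{U} = \underline{u}^2 + |x|^2$, which converts the hyperbolic Gauss curvature equation into a Euclidean Monge--Amp\`ere equation $\det D^2 U = \Psi(x,U,DU)$ with $\det D^2 V = 0$, and then runs the classical Trudinger--Urbas computation with $\ln\eta + \frac{\beta}{2}|DU|^2 + \ln U_{\xi\xi}$, $\eta = V - U^\epsilon - \frac{1}{2}\tau r^2$. The reduction to Euclidean Monge--Amp\`ere is precisely what lets the argument work for any asymptotic boundary bounding a uniformly convex domain: the key inequality $U^{ij}\eta_{ij} = U^{ij}(V_{ij} - U_{ij}) \geq -n$ needs no special structure from $v$. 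By contrast, the direct hyperbolic-curvature route you outline is the one the paper uses in Section~3 for $k<n$, and there the computation of $F^{ii}\nabla_{ii}(v^2 - u^2)$ depends heavily on $v$ being a geodesic hemisphere $v = \sqrt{R^2 - |x|^2}$ (which forces $\Omega$ to be a disk). If you run your hyperbolic test function for general $v$, you will pick up terms from $D^2(v^2)$ whose sign you do not control, and there is no obvious way to absorb them; you would either need to impose the disk restriction, which would weaken Theorem~\ref{Theorem1}, or find a genuinely new mechanism. You also do not spell out how to get the uniform lower bound $\inf_{\Omega_{\epsilon_0}}(V - U^\epsilon) \geq \tau r^2$ independent of $\epsilon$, which the paper achieves by a localized barrier $\phi = -\tau(r^2 - |x - x_0|^2)$ and a determinant comparison; this step is essential for the nested domains $\Omega^\epsilon_{\epsilon_0}$ to be uniformly separated from $\Gamma$ and for $\eta$ to have a uniform positive lower bound on $\Omega_{\epsilon_0}$.
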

We note that, in order to conduct the Pogorelov type estimate, we have assumed the existence of $v$ (c.f. \cite{Tru-Urb84}).
Such $v$ exists if $\Gamma$ bounds a uniformly convex domain.  For the special case $k = n = 2$, we do not need to assume the existence of $v$ (see Theorem 1.2 in \cite{Sui2019}) since we have obtained the pure interior $C^2$ estimate.

For general $k$, as known from Sheng-Urbas-Wang \cite{Sheng-Urbas-Wang}, Pogorelov type interior second order estimate is limited to Dirichlet problems with affine boundary values in Euclidean space. As we know, the affine boundary value spans a hyperplane, the principal curvatures of which are all zero, and hence satisfy \eqref{eqn2} in Euclidean space. The counterpart of the hyperplane in hyperbolic space is
\[ \Big\{ (x, \sqrt{R^2 - |x|^2})  \big\vert  \, |x| < R \Big\}, \]
which intersects $\partial_{\infty} \mathbb{H}^{n+1}$ at
$\Gamma = \{ x \in \mathbb{R}^n  \big\vert \, |x| = R \}$,
with $R$ a positive constant.
For this reason, we shall restrict our attention to circular asymptotic boundaries.
We shall derive the uniform interior second order estimate \eqref{eq2-33} by extending the estimate in \cite{Sheng-Urbas-Wang} to hyperbolic space. Our test function and the subsequent derivations are a little different from \cite{Sheng-Urbas-Wang} due to the hyperbolic space and the problem setting.
As a result, we obtain the following existence result over disks for general $k$.
\begin{thm}  \label{Theorem2}
Let  $k < n$ and  $\Omega = \{ x \in \mathbb{R}^n  \vert  |x| < R \}$.
Suppose that $0 < \psi(x, u) \in C^{\infty} (\mathbb{H}^{n + 1})$ satisfies
\begin{equation} \label{eqn12}
\psi_u - \frac{\psi}{u} \geq 0,
\end{equation}
there exists an admissible $\underline{u} \in C^4(\Omega) \cap C^0(\overline{\Omega})$ satisfying \eqref{eqn8} and
\begin{equation} \label{eqn13}
- \lambda (D^2 \underline{u}) \in \Gamma_{k + 1}  \quad  \mbox{near} \,\, \Gamma,
\end{equation}
and the compatibility conditions hold for sufficiently small $\epsilon > 0$.
Then there exists a unique admissible solution $u \in C^{\infty}(\Omega) \cap C^0(\overline{\Omega})$ with $u \geq \underline{u}$ to the asymptotic problem \eqref{eqn9}.
\end{thm}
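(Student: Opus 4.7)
The approach is an approximation scheme combined with a hyperbolic analogue of the Sheng-Urbas-Wang Pogorelov estimate, in which the upper hemisphere plays the role of an affine boundary function.

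First, for each sufficiently small $\epsilon > 0$, I would solve the Dirichlet problem \eqref{eqn10} on $\Omega_{\epsilon}$ with constant boundary value $\epsilon$. Under \eqref{eqn13} and the stated compatibility conditions, this was carried out in \cite{Sui-Sun} by the continuity method, using $\underline{u}$ as the required subsolution, yielding a unique admissible $u^{\epsilon} \in C^{\infty}(\Omega_{\epsilon}) \cap C^{0}(\overline{\Omega_{\epsilon}})$ with $u^{\epsilon} \geq \underline{u}$. The uniform interior $C^1$ bound \eqref{eqn15} on any fixed $\Omega_{\epsilon_0}$ for $\epsilon < \epsilon_0/2$ is also established there.

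Next, the crucial geometric observation: the graph of $v(x) = \sqrt{R^2 - |x|^2}$ over the disk $\Omega = \{|x| < R\}$ is a totally geodesic hypersurface in $\mathbb{H}^{n+1}$, so all its hyperbolic principal curvatures vanish and $v$ solves \eqref{eqn2}. On suitable nested subdomains $\Omega_{\epsilon_0}^{\epsilon}$, a standard comparison (using $\psi > 0$ and admissibility) gives $u^{\epsilon} \leq v$, so that $v - u^{\epsilon}$ is a natural nonnegative cutoff vanishing on part of the boundary---the hyperbolic substitute for the positivity of $\ell - u$ in the affine-boundary setting of \cite{Sheng-Urbas-Wang}.

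The central step is then the $\epsilon$-independent Pogorelov-type interior second order estimate \eqref{eq2-33}. I would test, at its interior maximum on $\Omega_{\epsilon_0}^{\epsilon}$, a function of the form
\[ W \;=\; \log \kappa_{\max} \,+\, \varphi(|\nabla u|^2) \,+\, \beta \log(v - u), \]
or a variant with $\log \kappa_{\max}$ replaced by a suitable $\sigma_k/\sigma_{k-1}$-type quotient. After rotating to diagonalize the Weingarten tensor and applying the linearized operator, the computation produces a leading good term from differentiating the equation twice, bad third-order terms that are absorbed by a properly chosen concave $\varphi$ in $|\nabla u|^2$, and gradient terms from $\log(v - u)$ controlled by taking $\beta$ sufficiently large. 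The hyperbolic background contributes additional curvature terms and terms from differentiating the conformal factor $x_{n+1}$; these are controlled through \eqref{eqn15}, while the sign condition \eqref{eqn12} is precisely what disposes of the $\psi_u$ contribution with the correct sign in the differentiated equation.

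Combining \eqref{eqn15} with \eqref{eq2-33}, Evans-Krylov followed by Schauder yields uniform higher order interior estimates on every compactly contained subdomain; a diagonal argument then extracts a subsequence $u^{\epsilon_j}$ converging locally smoothly to an admissible $u \in C^{\infty}(\Omega)$ solving \eqref{eqn9}, and $u = 0$ on $\Gamma$ follows from the squeeze $\underline{u} \leq u \leq v$ since both barriers vanish at infinity. Uniqueness is standard via the comparison principle for admissible solutions, invoking \eqref{eqn12} for the required monotonicity of the right-hand side in $u$. The main obstacle is the third step: adapting the Sheng-Urbas-Wang computation simultaneously to the hyperbolic metric, a nonconstant $\psi(x,u)$, and the curved cutoff $v - u$ in place of an affine one, so that the bad third-derivative, gradient-of-$v$, and $\psi_u$ contributions can all be absorbed into the leading good quantity without spoiling $\epsilon$-independence.
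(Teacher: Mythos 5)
Your overall scheme coincides with the paper's: solve the approximating problems \eqref{eqn10} via \cite{Sui-Sun}, use the hemisphere $v=\sqrt{R^2-|x|^2}$ (with $\kappa[v]=0$) as the hyperbolic substitute for the affine boundary function of \cite{Sheng-Urbas-Wang}, prove a Pogorelov type interior curvature bound with a cutoff built from $v-u^{\epsilon}$, and conclude by Evans--Krylov, a diagonal argument, the squeeze $\underline{u}\le u\le v$ at infinity, and a comparison argument for uniqueness using \eqref{eqn12}. However, there is a concrete missing step between ``a standard comparison gives $u^{\epsilon}\le v$'' and the uniform estimate \eqref{eq2-33}: the maximum principle only gives the qualitative inequality $u^{\epsilon}<v$, which degenerates as $\epsilon\to 0$ and by itself yields nothing $\epsilon$-independent from a cutoff of the form $(v-u)^{\beta}$ or $(v^2-u^2-c)^{b}$. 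What is actually needed, and what the paper proves, is the quantitative separation $\inf_{\Omega_{\epsilon_0}}\big(v^2-(u^{\epsilon})^2\big)\ge \tau r^2$ uniformly in $\epsilon$ (inequality \eqref{eq2-14}), obtained by comparing $u^{\epsilon}$ on balls $B_r(x_0)$ with the perturbed hemisphere $v^{\tau}=\sqrt{v^2-\tau(r^2-|x-x_0|^2)}$, whose curvature is $\mathcal{O}(\tau)$ while $\sigma_k(A[u^{\epsilon}])$ has a uniform positive lower bound there (from the $C^0$, $C^1$ bounds and $\psi>0$). Only with \eqref{eq2-14} can one define the nested domains $\Omega^{\epsilon}_{\epsilon_0}$ of \eqref{eq2-12} on whose boundary the cutoff vanishes (so the maximum of the test function is interior) and convert the Pogorelov bound \eqref{eq2-7} into \eqref{eq2-33} on the fixed set $\Omega_{\epsilon_0}$. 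Your proposal does not supply this argument, and without it the scheme does not close.

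Two smaller points. First, you assign \eqref{eqn12} the role of killing the $\psi_u$ term in the differentiated equation inside the curvature estimate; in the paper \eqref{eqn12} plays no role there (the term $\psi_{11}$ is simply bounded by $-C\kappa_1$ using the uniform $C^1$ bound \eqref{eqn15}), and its actual uses are the interior gradient estimate from \cite{Sui-Sun} and the uniqueness proof. Second, ``uniqueness by the standard comparison principle'' hides the fact that admissibility is not preserved along the segment $sv+(1-s)u$ in general; the paper's proof verifies admissibility of the deformation at the touching point using the specific structure of $A[u]$, together with the computation $G_u=\frac{1}{u}(\psi-\frac{1}{w}\sum f_i)$ which is exactly where \eqref{eqn12} enters. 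These are fixable, but the uniform separation bound is the genuine gap.
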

We point out that condition \eqref{eqn12}, \eqref{eqn13} and the compatibility conditions (see the introduction of \cite{Sui-Sun}) are assumed to guarantee the existence of smooth admissible solution $u^{\epsilon} \geq \underline{u}$ to \eqref{eqn10} on $\overline{\Omega_{\epsilon}}$ (see Theorem 1.7 in \cite{Sui-Sun}). Moreover, \eqref{eqn12} is used to derive interior gradient estimate, which assures \eqref{eqn15}. It should also be emphasized that the assumption $\Omega = \{ x \in \mathbb{R}^n  \vert  |x| < R \}$ is assumed solely for Pogorelov type second order interior estimate. In the end, we provide an example (see also \cite{Sui-Sun}) satisfying the assumptions in our theorems.

\begin{exmp}
Let
\[\Omega = \big\{ x \in \mathbb{R}^n \big\vert \, |x| < (1 - \sigma^2)^{\frac{1}{2}}  R_1 \big\}, \]
where $\sigma \in (0, 1)$ and $R_1 > 0$ are constants. Let
$\psi = \alpha u^2$ with
\[ \alpha = \frac{\sigma_k^{\frac{1}{k}} (\sigma, \ldots, \sigma)}{(1 - \sigma)^2 R_1^2}. \]
Then we may choose $\underline{u} = \sqrt{R_1^2 - |x|^2} - \sigma R_1$.
\end{exmp}

This paper is organized as follows: we prove Theorem \ref{Theorem1} in section 2 and Theorem \ref{Theorem2} in section 3.

\medskip
\noindent
{\bf Acknowledgements} \quad
The authors would like to thank the referees for giving us many valuable comments and suggestions.

\vspace{4mm}

\section{Prescribed Gauss curvature equations}

\vspace{4mm}

In this section, we shall focus on prescribed Gauss curvature equation, that is, equation \eqref{eqn9} with $k = n$. Before we discuss this type of equation, we first present some preliminary formulae on vertical graph of $u$, which can be found in \cite{GSS09, GS10, GS11, GSX14, Sui2019, Sui-Sun}. The counterpart in Euclidean space can be found in \cite{CNSV}.

\vspace{2mm}

\subsection{Geometric quantities on vertical graph of $u$}~

\vspace{2mm}

For convenience, denote
\[ \Sigma = \{ (x, u(x)) \vert \, x \in \Omega \}. \]
The coordinate vector fields on $\Sigma$ are given by
\[  \partial_i + u_i \partial_{n + 1}, \quad i = 1, \ldots, n, \]
where $\partial_{i} = \frac{\partial}{\partial x^{i}}$ with $i = 1, \ldots, n + 1$ are the coordinate vector fields in $\mathbb{R}^{n+1}$.

When $\Sigma$ is considered as a hypersurface in $\mathbb{R}^{n + 1}$, the upward unit normal,  metric, inverse of the metric and second fundamental form are given respectively by
\[\nu = \frac{1}{w} ( - D u, 1 ), \quad w = \sqrt{ 1 + |D u |^2},  \]
\[ \tilde{g}_{ij} = \delta_{ij} + u_i u_j, \quad  \tilde{g}^{ij} =  \delta_{ij} - \frac{u_i u_j}{w^2},  \quad  \tilde{h}_{ij} = \frac{u_{ij}}{w}. \]
The Euclidean principal curvatures $\tilde{\kappa}$ are the eigenvalues of the symmetric matrix
\[ \tilde{a}_{ij} = \frac{1}{w} \gamma^{ik} u_{kl} \gamma^{lj}, \]
where
\[ \gamma^{ik} = \delta_{ik} - \frac{u_i u_k}{w ( 1 + w )}, \quad
 \gamma_{ik} = \delta_{ik} + \frac{u_i u_k}{1 + w}, \]
and
\[ \gamma^{ik} \gamma_{kj} = \delta_{ij}, \quad \gamma_{ik} \gamma_{kj} = \tilde{g}_{ij}.  \]

When $\Sigma$ is viewed as a hypersurface in $\mathbb{H}^{n + 1}$, its unit upward normal, metric, second fundamental form are given respectively by
\[ {\bf n} = u \nu, \quad  g_{ij} = \frac{1}{u^2} ( \delta_{ij} + u_i u_j ), \quad
 h_{ij} = \frac{1}{u^2 w} ( \delta_{ij} + u_i u_j + u u_{ij} ). \]
The hyperbolic principal curvatures $\kappa [u]$ are the eigenvalues of the symmetric matrix $A [u] = \{ a_{ij} \}$, where
\[ a_{ij} =  u^2 \gamma^{ik} h_{kl} \gamma^{lj}
       = \frac{1}{w} \gamma^{ik} ( \delta_{kl} + u_k u_l + u u_{kl} ) \gamma^{lj} =  \frac{1}{w} ( \delta_{ij} + u \gamma^{ik} u_{kl} \gamma^{lj} ). \]
Equation \eqref{eqn9} can be written as
\begin{equation*}
  f( \kappa [ u ] ) = f( \lambda( A[ u ] )) = F( A[ u ] ) =  \psi(x, u).
\end{equation*}

From the above formulae, we observe the following relations.
\begin{equation} \label{eq1-16}
h_{ij} = \frac{1}{u} \tilde{h}_{ij} + \frac{\nu^{n+1}}{u^2} \tilde{g}_{ij},
\end{equation}
where $\nu^{n+1} = \nu \cdot \partial_{n + 1}$ and $\cdot$ is the inner product in $\mathbb{R}^{n+1}$.
Note that formula \eqref{eq1-16} indeed holds for any local frame on any hypersurface $\Sigma$
which may not be a graph (then $u = X \cdot \partial_{n + 1}$ where $X$ is the position vector of $\Sigma$ in $\mathbb{R}^{n + 1}$). We also have the relation
\begin{equation} \label{eq1-17}
\kappa_i =  u \tilde{\kappa_i} + \nu^{n+1},  \quad i = 1, \ldots, n.
\end{equation}

\vspace{2mm}

\subsection{Pogorelov type interior second order estimate}~

\vspace{2mm}

Now we consider the following prescribed Gauss curvature equation
\begin{equation} \label{eq1-1}
\left\{ \begin{aligned}
\det  A[ u ]  = & \psi(x, u, D u) \quad & \mbox{in} \,\, \Omega, \\
u = & 0 \quad  & \mbox{on} \,\, \Gamma,
\end{aligned} \right.
\end{equation}
where $\psi$ is a smooth positive function defined on $\Omega \times (0, \infty) \times \mathbb{R}^n$.

Let $v \in C^{2}(\Omega) \cap C^{0} (\overline{\Omega})$ be a locally convex solution to the homogeneous equation
\begin{equation} \label{eq1-2}
\left\{ \begin{aligned}
\det (\delta_{kl} + v_k v_l + v v_{kl}) = &  0 \quad \mbox{in} \,\, \Omega, \\
v = & 0 \quad \mbox{on} \,\, \Gamma.
\end{aligned} \right.
\end{equation}
By the transformation $V = v^2 + |x|^2$, one can see that \eqref{eq1-2} is equivalent to
\begin{equation} \label{eq1-3}
\left\{ \begin{aligned}
\det (V_{kl}) = &  0 \quad &\mbox{in} \,\, \Omega, \\
V = & |x|^2 \quad &\mbox{on} \,\, \Gamma.
\end{aligned} \right.
\end{equation}

Assume that there exists a strictly locally convex $\underline{u} \in C^4(\Omega) \cap C^0(\overline{\Omega})$ such that
\begin{equation} \label{eq1-4}
\left\{ \begin{aligned}
\det A [ \underline{u} ] \geq  &  \psi(x, \underline{u}, D \underline{u})  \quad & \mbox{in} \,\, \Omega, \\
\underline{u}  = &  0 \quad  & \mbox{on} \,\, \Gamma.
\end{aligned} \right.
\end{equation}
Denoting $\underline{U} = {\underline{u}}^2 + |x|^2$,
we can verify that $\underline{U}$ is strictly locally convex and satisfies
\begin{equation} \label{eq1-6}
\left\{ \begin{aligned}
\det (\underline{U}_{kl}) \geq &  \Psi(x, \underline{U}, D \underline{U})   \quad &\mbox{in} \,\, \Omega, \\
\underline{U} = & |x|^2 \quad &\mbox{on} \,\, \Gamma,
\end{aligned} \right.
\end{equation}
where
\[ \Psi(x, U, D U) = 2^n \Big(\frac{|D U|^2 - 4 x \cdot D U + 4 U}{4 U - 4 |x|^2} \Big)^{\frac{n + 2}{2}} \psi\Big(x, \sqrt{U - |x|^2}, \frac{D U - 2 x}{2 \sqrt{U - |x|^2}} \Big). \]
By \eqref{eq1-3}, \eqref{eq1-6} and the maximum principle, we know that
\[ \underline{U} < V \quad \mbox{in} \,\, \Omega. \]

By Theorem 1.1 in \cite{Sui2019}, for $\epsilon > 0$ sufficiently small, there exists a strictly locally convex smooth solution $u^{\epsilon} \geq \underline{u}$ to the approximating Dirichlet problem
\begin{equation} \label{eq1-5}
\left\{\begin{aligned}
\det A [ u ]  =  & \psi(x, u, D u) \quad &\mbox{in} \,\, \Omega_{\epsilon}, \\
u  = & \epsilon \quad & \mbox{on} \,\, \Gamma_{\epsilon}.
\end{aligned} \right.
\end{equation}
Equivalently, we can verify that
$U^{\epsilon} = (u^{\epsilon})^2 + |x|^2$ is a strictly locally convex solution to the Monge-Amp\`ere equation
\begin{equation} \label{eq1-7}
\left\{ \begin{aligned}
\det (U_{kl}) = &  \Psi(x, U, D U)   \quad &\mbox{in} \,\, \Omega_{\epsilon}, \\
U = & |x|^2 + \epsilon^2 = \underline{U} \quad &\mbox{on} \,\, \Gamma_{\epsilon},
\end{aligned} \right.
\end{equation}
and $\underline{U} \leq U^{\epsilon} < V$ in $\overline{\Omega_{\epsilon}}$.

For fixed $\epsilon_0 > 0$ sufficiently small, let $r = d(\Omega_{\epsilon_0}, \Gamma_{\epsilon_0 / 2})$.  For any $x_0 \in \Omega_{\epsilon_0}$ and for $\tau > 0$, consider
\[ \phi(x) = - \tau (r^2 - |x - x_0|^2). \]
We note that on $B_{r}(x_0)$,
\[ \det (D^2 V + D^2 \phi) \leq C_1(n) \sum\limits_{k = 1}^n \tau^k M^{n - k}, \quad \mbox{where} \,\, M = \sup_{\Omega_{\epsilon_0/2}} |D^2 V|. \]
Also, we observe that on $\overline{\Omega_{\epsilon_0/2}}$, for any $0 < \epsilon < \frac{\epsilon_0}{4}$,
\[ C \geq U^{\epsilon} \geq \underline{U} \geq \frac{\epsilon_0^2}{4} + |x|^2 \quad  \mbox{and} \quad  |D U^{\epsilon}| \leq C. \]
Thus, for any $0 < \epsilon < \frac{\epsilon_0}{4}$, we have
\[ \inf_{\Omega_{\epsilon_0/2}} \Psi(x, U^{\epsilon}, D U^{\epsilon}) \geq C^{-1} > 0. \]

Now we can choose $\tau$ sufficiently small (depending on $\epsilon_0$) such that for any $x_0 \in \Omega_{\epsilon_0}$, we have on $B_{r}(x_0)$,
\[ \det (D^2 V + D^2 \phi) \leq \det D^2 U^{\epsilon}, \quad \forall \,\, 0 < \epsilon < \frac{\epsilon_0}{4}. \]
Since $V + \phi = V > U^{\epsilon}$ on $\partial B_r(x_0)$, by the maximum principle, $V + \phi \geq U^{\epsilon}$ over $B_r(x_0)$. In particular, we have
\[ V (x_0) - \tau r^2 = (V + \phi) (x_0) \geq U^{\epsilon} (x_0). \]
Thus,
\begin{equation} \label{eq1-14}
\inf_{\Omega_{\epsilon_0}} (V - U^{\epsilon}) \geq \tau r^2, \quad \forall \,\, 0 < \epsilon < \frac{\epsilon_0}{4}.
\end{equation}

Now, set
\[ \Omega^{\epsilon}_{\epsilon_0} = \{ x\in \Omega_{\epsilon} \,\big\vert\, (V - U^{\epsilon})(x) > \frac{1}{2} \tau r^2 \} \quad  \mbox{for} \,\, 0 < \epsilon < \frac{\epsilon_0}{4}, \]
and
\[  \hat{\Omega}_{\epsilon_0} = \{ x \in \Omega \,\big\vert\, (V - \underline{U})(x) \geq \frac{1}{2} \tau r^2 \}.  \]
It is easy to verify that
\begin{equation} \label{eq2-4}
\Omega_{\epsilon_0} \subset \subset \Omega^{\epsilon}_{\epsilon_0} \subset \hat{\Omega}_{\epsilon_0} \subset \subset \Omega, \quad \forall \,\, 0 < \epsilon < \frac{\epsilon_0}{4}.
\end{equation}
Denote
\[ \delta_{\epsilon_0} = \min\limits_{\hat{\Omega}_{\epsilon_0}} \underline{u} > 0. \]
Then, for any $0 < \epsilon < \min\{\epsilon_0 / 4, \delta_{\epsilon_0} / 2\}$,  we have
\[ |x|^2 + (\delta_{\epsilon_0})^2 \leq U^{\epsilon} \leq C \quad \mbox{and} \quad  |D U^{\epsilon}| \leq C  \quad \mbox{on} \,\, \Omega^{\epsilon}_{\epsilon_0}, \]
and
\[ V - U^{\epsilon} = \frac{1}{2} \tau r^2 \quad \mbox{on} \,\, \partial\Omega^{\epsilon}_{\epsilon_0}. \]

Now, we apply Trudinger-Urbas' method \cite{Tru-Urb84} to obtain second order Pogorelov type interior estimate.   Consider in $\Omega_{\epsilon_0}^{\epsilon}$,
\[ \ln \eta + \frac{\beta}{2} |D U^{\epsilon}|^2 + \ln U^{\epsilon}_{\xi \xi}, \quad \mbox{where} \,\, \eta = V - U^{\epsilon} - \frac{1}{2} \tau r^2. \]
The maximum value must be achieved at an interior point in $\Omega_{\epsilon_0}^{\epsilon}$. We assume it to be $0$ and in direction $\xi = {\partial}_1$. In addition, we assume that $\big(U^{\epsilon}_{ij}(0)\big)$ is diagonal after we rotate $\partial_2, \ldots, \partial_n$. For convenience, we omit the superscript $\epsilon$ in $U^{\epsilon}$. Thus, the function
\[  \ln \eta + \frac{\beta}{2} |D U|^2 + \ln U_{11}   \]
achieves its maximum at $0$, at which, we have
\begin{equation} \label{eq1-8}
\frac{\eta_i}{\eta} + \beta U_i U_{ii} + \frac{U_{11i}}{U_{11}} = 0,
\end{equation}
\begin{equation} \label{eq1-9}
\frac{U^{ij} \eta_{ij}}{\eta} - \frac{U^{ij} \eta_i \eta_j}{\eta^2} + \beta U_k U^{ij} U_{kij} + \beta \Delta U + \frac{U^{ij} U_{11ij}}{U_{11}} - \frac{U^{ij} U_{11i} U_{11j}}{U_{11}^2} \leq 0,
\end{equation}
where $U^{ij} U_{jk} = \delta^i_k$. Since $U$ is strictly locally convex, it suffices to give an upper bound for $\eta U_{11}$. We may assume that $U_{11} \geq 1$.
Write equation \eqref{eq1-7} as
\[ \ln \det (D^2 U) = \ln \Psi(x, U, D U) = \tilde{\Psi}(x, U, D U). \]
Differentiate this equation, we obtain
\begin{equation} \label{eq1-10}
U^{ij} U_{ijk} = \tilde{\Psi}_{x_k} + \tilde{\Psi}_U U_k + \tilde{\Psi}_{U_i} U_{ik},
\end{equation}
\begin{equation} \label{eq1-11}
U^{ij} U_{ij11} - U^{ik} U^{jl} U_{ij1} U_{kl1} \geq \tilde{\Psi}_{U_i} U_{i11} - C U_{11}^2.
\end{equation}
Also, we have
\begin{equation} \label{eq1-12}
U^{ij} \eta_{ij} = U^{ij} (V_{ij} - U_{ij}) \geq - n.
\end{equation}
Taking \eqref{eq1-8} and \eqref{eq1-10}--\eqref{eq1-12} into \eqref{eq1-9} yields,
\begin{equation} \label{eq1-13}
- \frac{U^{ii} \eta_i^2}{\eta^2} - C \beta + \beta U_{11} + \frac{U^{ii} U^{jj} U_{ij1}^2}{U_{11}} - \frac{C}{\eta} - C U_{11} - \frac{U^{ii} U_{11i}^2}{U_{11}^2} \leq 0.
\end{equation}
Assuming $\eta U_{11} \geq 1$, again by \eqref{eq1-8}, we have
\[ \begin{aligned}
\frac{U^{ii} \eta_i^2}{\eta^2} = & \frac{U^{11} \eta_1^2}{\eta^2} + \sum_{i \neq 1} U^{ii} \Big( \beta U_i U_{ii} + \frac{U_{11i}}{U_{11}} \Big)^2 \\
= &  \frac{U^{11} \eta_1^2}{\eta^2} + \sum_{i \neq 1}  \Big( \beta^2 U_i^2 U_{ii} + 2 \beta U_i \big( - \frac{\eta_i}{\eta} - \beta U_i U_{ii} \big) + U^{ii} \frac{U_{11i}^2}{U_{11}^2} \Big) \\
\leq & \frac{C}{\eta} + \frac{C \beta}{\eta} + \sum_{i \neq 1} \frac{U^{ii} U_{11i}^2}{U_{11}^2}.
\end{aligned} \]
Hence, \eqref{eq1-13} reduces to
\begin{equation} \label{eq1-18}
\frac{U^{ii} U^{jj} U_{ij1}^2}{U_{11}} - \frac{U^{ii} U_{11i}^2}{U_{11}^2} - \sum\limits_{i \neq 1} \frac{U^{ii} U_{11i}^2}{U_{11}^2} + \beta U_{11} - C U_{11} - \frac{C}{\eta} - \frac{C \beta}{\eta} \leq 0.
\end{equation}
Also,
\[ \begin{aligned}
\frac{U^{ii} U^{jj} U_{ij1}^2}{U_{11}} = & \frac{U^{ii} U^{11} U_{i11}^2}{U_{11}} + \sum\limits_{j \neq 1} \frac{U^{11} U^{jj} U_{1j1}^2}{U_{11}} + \sum\limits_{j \neq 1, i \neq 1} \frac{U^{ii} U^{jj} U_{ij1}^2}{U_{11}} \\
\geq &  \frac{U^{ii} U_{11i}^2}{U_{11}^2} + \sum\limits_{j \neq 1} \frac{U^{jj} U_{11j}^2}{U_{11}^2}.
\end{aligned} \]
Hence \eqref{eq1-18} becomes
\[ \beta U_{11} - C U_{11} - \frac{C}{\eta} - \frac{C \beta}{\eta} \leq 0. \]
Choosing $\beta$ sufficiently large, we thus obtain an upper bound $\eta U_{11} \leq C$.
Therefore, by \eqref{eq1-14}, we arrive at
\begin{equation} \label{eq1-15}
\sup_{\Omega_{\epsilon_0}} |D^2 U^{\epsilon}| \leq C, \quad \forall \,\, 0 < \epsilon < \min\{\epsilon_0 / 4, \delta_{\epsilon_0} / 2\}.
\end{equation}

\vspace{4mm}

\section{Prescribed $k$th Weingarten curvature equations}

\vspace{4mm}

In this section, we consider prescribed $k$th Weingarten curvature equation
\begin{equation} \label{eq2-1}
\left\{ \begin{aligned}
f ( \kappa [ u ] ) =  & F( A [ u ] ) = \sigma_k^{1/k} ( A [ u ] ) = \psi(x, u) \quad & \mbox{in} \,\, \Omega, \\
u  = &   0 \quad & \mbox{on} \,\, \Gamma.
\end{aligned} \right.
\end{equation}

Motivated by the idea of Trudinger-Urbas \cite{Tru-Urb84}, we assume that there exists a solution $v \in C^{2}(\Omega) \cap C^{0} (\overline{\Omega})$ with $\kappa[v] \in \overline{\Gamma_k}$ to the homogeneous equation
\begin{equation} \label{eq2-2}
\left\{ \begin{aligned}
f (\kappa [ v ] ) = &  0 \quad \mbox{in} \,\, \Omega, \\
v = & 0 \quad \mbox{on} \,\, \Gamma.
\end{aligned} \right.
\end{equation}
Since $\kappa[\underline{u}] \in \Gamma_k$ and $\kappa[v] \in \partial \Gamma_k$, we have
\[ \underline{u} < v \quad \mbox{in} \,\, \Omega. \]
In fact, suppose $\underline{u} - v$ achieves a nonnegative maximum at $x_0 \in \Omega$. Then we have
\[ \underline{u}(x_0) \geq v(x_0),\quad D \underline{u}(x_0) = D v(x_0), \quad D^2\underline{u}(x_0) \leq D^2 v(x_0). \]
But then at $x_0$,
\[ \delta_{ij} + v  {\gamma}^{ik}  v_{kl}  \gamma^{lj}
  \geq  \delta_{ij} + v {\gamma}^{ik}  \underline{u}_{kl} \gamma^{lj}
   =  \Big( 1 - \frac{v}{\underline{u}}\Big) \delta_{ij} + \frac{v}{\underline{u}} \Big( \delta_{ij} + \underline{u}  \gamma^{ik} \underline{u}_{kl} \gamma^{lj} \Big), \]
which leads to a contradiction.

For $\epsilon > 0$ sufficiently small, let $u^{\epsilon} \geq \underline{u}$ be a smooth admissible solution to the approximating Dirichlet problem \eqref{eqn10}.
Again, we have $\underline{u} \leq u^{\epsilon} < v$ in $\overline{\Omega_{\epsilon}}$.

For fixed $\epsilon_0 > 0$ sufficiently small, let $r = d(\Omega_{\epsilon_0}, \Gamma_{\epsilon_0 / 2})$.  For any $x_0 \in \Omega_{\epsilon_0}$ and for sufficiently small $\tau > 0$ (depending on $\epsilon_0$), consider on $B_{r}(x_0)$,
\[ v^{\tau} = \sqrt{ v^2 - \tau (r^2 - |x - x_0|^2)} > 0. \]
Since
\[  D v^{\tau} = D v + \mathcal{O}(\tau), \quad |D v^{\tau}|^2 = |D v|^2 + \mathcal{O}(\tau), \]
we have
\[ \delta_{il} - \frac{v^{\tau}_i v^{\tau}_l}{1 + |D v^{\tau}|^2} = \delta_{il} - \frac{v_i v_l}{1 + |D v|^2} + \mathcal{O}(\tau). \]
Also,
\[  \delta_{ij} + v^{\tau}_i v^{\tau}_j  + v^{\tau} v^{\tau}_{ij} = \frac{1}{2} \Big( v^2 + |x|^2 - \tau (r^2 - |x - x_0|^2) \Big)_{ij} = \delta_{ij} + v_i v_j  + v v_{ij} + \tau \delta_{ij}.  \]
Therefore, we have
\begin{equation} \label{eq2-3}
\begin{aligned}
& \sigma_k \Big( \big( \delta_{il} - \frac{v^{\tau}_i v^{\tau}_l}{1 + |D v^{\tau}|^2} \big) \big( \delta_{lj} + v^{\tau}_l v^{\tau}_j + v^{\tau} v^{\tau}_{lj} \big) \Big) \\
= &  \sigma_k \Big( \big( \delta_{il} - \frac{v_i v_l}{1 + |D v|^2} \big) \big( \delta_{lj} + v_l v_j + v v_{lj} \big) + \mathcal{O}(\tau) \Big)   \\
\leq & \sigma_k \Big( \big( \delta_{il} - \frac{v_i v_l}{1 + |D v|^2} \big) \big( \delta_{lj} + v_l v_j + v v_{lj} \big) \Big) + C \tau = C \tau.
\end{aligned}
\end{equation}
Here $C$ in the last row of \eqref{eq2-3} depends on $n$, $k$, $\epsilon_0$ and $\Vert v \Vert_{C^2(\Omega_{\epsilon_0 / 2})}$. The argument is similar as in section 2.

In \cite{Sui-Sun}, we have obtained the $C^0$ estimate
\[ \epsilon \leq \underline{u} \leq u^{\epsilon} \leq C_0 \quad \text{on  } \overline{\Omega_{\epsilon}}, \]
where $C_0$ is independent of $\epsilon$. Therefore we have
\[ \inf_{\Omega_{\epsilon_0/2}} \psi(x, u^{\epsilon}) \geq C^{-1} > 0, \quad \forall \,\, 0 < \epsilon < \frac{\epsilon_0}{2}. \]
By \eqref{eq2-3}, we can choose $\tau$ sufficiently small (depending on $\epsilon_0$) such that for any $x_0 \in \Omega_{\epsilon_0}$, we have on $B_{r}(x_0)$,
\begin{equation} \label{eq2-6}
\sigma_k (A[ v^{\tau} ]) < \frac{(v^{\tau})^k}{(u^{\epsilon})^k} \sigma_k (A[ u^{\epsilon} ]), \quad \forall \,\, 0 < \epsilon < \frac{\epsilon_0}{2}.
\end{equation}
Since $v^{\tau} = v > u^{\epsilon}$ on $\partial B_r(x_0)$, we must have $v^{\tau} > u^{\epsilon}$ over $B_r(x_0)$. In fact, suppose
$v^{\tau} - u^{\epsilon}$ achieves a nonpositive minimum at $x_1 \in B_r(x_0)$. Then we have
\[ v^{\tau}(x_1) \leq u^{\epsilon}(x_1),\quad D v^{\tau}(x_1) = D u^{\epsilon}(x_1), \quad D^2 v^{\tau}(x_1) \geq D^2 u^{\epsilon}(x_1). \]
But then at $x_1$,
\[\begin{aligned}
 & \sigma_k \Big( \frac{A[ v^{\tau} ]}{v^{\tau}} \Big) =   \sigma_k \Big( \frac{1}{w} \big( \frac{\delta_{ij}}{v^{\tau}} +  \gamma^{il} v^{\tau}_{lm} \gamma^{mj} \big) \Big)
\\ \geq  \, &  \sigma_k \Big( \frac{1}{w} \big( \frac{\delta_{ij}}{u^{\epsilon}} +  \gamma^{il} u^{\epsilon}_{lm} \gamma^{mj} \big) \Big)
=  \sigma_k \Big( \frac{A[ u^{\epsilon} ]}{u^{\epsilon}} \Big),
\end{aligned} \]
contradicting \eqref{eq2-6}.
In particular, we have
\[ v^2 (x_0) - \tau r^2  > (u^{\epsilon})^2 (x_0), \]
which implies that
\begin{equation} \label{eq2-14}
\inf_{\Omega_{\epsilon_0}} \big( v^2 - (u^{\epsilon})^2 \big) \geq 2 c := \tau r^2 > 0, \quad \forall \,\, 0 < \epsilon < \frac{\epsilon_0}{2}.
\end{equation}
Note that $c$ depends on $\epsilon_0$ but is independent of $\epsilon$.

Similar to last section, define for $0 < \epsilon < \frac{\epsilon_0}{2}$,
\[ \Omega^{\epsilon}_{\epsilon_0} = \{ x\in \Omega_{\epsilon} \,\big\vert\,  \big( v^2 - (u^{\epsilon})^2 \big) (x) > c \}   \quad \mbox{and} \quad
  \hat{\Omega}_{\epsilon_0} = \{ x \in \Omega \,\big\vert\, (v^2 - \underline{u}^2)(x) \geq c \}.  \]
We can verify that
\begin{equation} \label{eq2-12}
\Omega_{\epsilon_0} \subset \subset \Omega^{\epsilon}_{\epsilon_0} \subset \hat{\Omega}_{\epsilon_0} \subset \subset \Omega, \quad \forall \,\, 0 < \epsilon < \frac{\epsilon_0}{2}.
\end{equation}
Denote
\[ \delta_{\epsilon_0} = \min\limits_{\hat{\Omega}_{\epsilon_0}} \underline{u} > 0. \]
We know that for any $0 < \epsilon < \min\{\epsilon_0 / 2, \delta_{\epsilon_0} / 2\} = \delta_{\epsilon_0} / 2$,
\[ \delta_{\epsilon_0} \leq u^{\epsilon} \leq C_0, \quad  |D u^{\epsilon}| \leq C  \quad \mbox{on} \,\, \Omega^{\epsilon}_{\epsilon_0}  \]
and
\[ v^2 - (u^{\epsilon})^2 = c \quad \mbox{on} \,\, \partial\Omega^{\epsilon}_{\epsilon_0}. \]

Now we want to establish Pogorelov type interior curvature estimate for the graph of $u^{\epsilon}$ on $\Omega_{\epsilon_0}^{\epsilon}$ for any $0 < \epsilon < \delta_{\epsilon_0} / 2$, i.e.
\begin{equation} \label{eq2-7}
\max_{1 \leq i \leq n} \big| \kappa_i [ u^{\epsilon}] (x) \big| \,\leq \, \frac{C}{\big( v^2 - (u^{\epsilon})^2 - c \big)^b}, \quad \forall \,\, x \in \Omega_{\epsilon_0}^{\epsilon}, \quad \forall \,\, 0 < \epsilon < \frac{\delta_{\epsilon_0}}{2},
\end{equation}
where $b \geq 1$ is a constant to be determined later.
For convenience, we omit the superscript in $u^{\epsilon}$ and denote the graph of $u$ over $\Omega_{\epsilon_0}^{\epsilon}$ by $\Sigma$.

At this point, we assume that $v = \sqrt{R^2 - |x|^2}$. Consequently, $\Omega$ has to be
\[ \Omega = \{ x \in \mathbb{R}^n  \vert \, |x| < R \}. \]
Note that $\kappa[v] = (0, \ldots, 0)$, which can be computed easily by formula \eqref{eq1-17}. Hence $v$ satisfies \eqref{eq2-2}.

In order to perform calculations on $\Sigma$, we may need the following preliminary formulae (see \cite{GS11, Sui2019, Sui-Sun}).

\vspace{4mm}

\subsection{Calculations on $\Sigma$}~

\vspace{2mm}

For a hypersurface $\Sigma$ in $\mathbb{H}^{n+1}$, let $g$ and $\nabla$ be the induced metric and Levi-Civita connection on $\Sigma$. Let $\tilde{g}$ and $\tilde{\nabla}$ be the metric and Levi-Civita connection on $\Sigma$ induced from $\mathbb{R}^{n+1}$. Let $X$ be the position vector of $\Sigma$ in $\mathbb{R}^{n + 1}$ and set $u = X \cdot \partial_{n + 1}$. Since the associated Christoffel symbols have the following relation
\[ \Gamma_{ij}^k = \tilde{\Gamma}_{ij}^k - \frac{1}{u} (u_i \delta_{kj} + u_j \delta_{ik} - \tilde{g}^{kl} u_l \tilde{g}_{ij}), \]
we can prove that for any $w \in C^2(\Sigma)$ and any local frame on $\Sigma$,
\begin{equation} \label{eq2-10}
\nabla_{ij} w = (w_i)_j - \Gamma_{ij}^k w_k = \tilde{\nabla}_{ij} w + \frac{1}{u}( u_i w_j + u_j w_i - \tilde{g}^{kl} u_l w_k \tilde{g}_{ij}).
\end{equation}

\begin{lemma}  \label{Lemma2-2}
When $\Sigma$ is viewed as a hypersurface in $\mathbb{R}^{n+1}$, we have
\[ u_i = \tau_i \cdot \partial_{n + 1}, \quad (x^{\alpha})_i = \tau_i \cdot \partial_{\alpha},  \quad \alpha = 1, \ldots, n, \]

\[   \tilde{g}^{kl} u_k u_l  = |\tilde\nabla u|^2 = 1 - (\nu^{n+1})^2, \]

\[  \tilde{\nabla}_{ij} u = \tilde{h}_{ij} \nu^{n+1}, \quad \tilde{\nabla}_{ij} x^{\alpha} = \tilde{h}_{ij} \nu^{\alpha}, \quad \alpha = 1, \ldots, n, \]

\[  (\nu^{n+1})_i = - \tilde{h}_{ij} \,\tilde{g}^{j k} u_k,  \]
and
\[   \tilde{\nabla}_{ij} \nu^{n+1} = - \tilde{g}^{kl} ( \nu^{n+1} \tilde{h}_{il} \tilde{h}_{kj} + u_l \tilde{\nabla}_k \tilde{h}_{ij} ),  \]
where $\tau_1, \ldots, \tau_n$ is any local frame on $\Sigma$, $X = (x^1, \ldots, x^n, u)$ is the position vector field on $\Sigma$ and $\nu = (\nu^1, \ldots, \nu^{n + 1})$ is the unit normal vector field to $\Sigma$ in $\mathbb{R}^{n+1}$.
\end{lemma}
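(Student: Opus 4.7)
The plan is to derive each identity directly from the ambient Euclidean structure of $\mathbb{R}^{n+1}$ together with the Gauss and Weingarten formulas for $\Sigma \subset \mathbb{R}^{n+1}$. Since $X = (x^1, \dots, x^n, u)$, differentiating the position vector along a local frame $\tau_1, \dots, \tau_n$ yields $(x^\alpha)_i = \tau_i \cdot \partial_\alpha$ and $u_i = \tau_i \cdot \partial_{n+1}$ at once. The length identity follows by decomposing the ambient unit vector $\partial_{n+1}$ orthogonally along $\Sigma$,
\[ \partial_{n+1} = \tilde\nabla u + (\partial_{n+1} \cdot \nu)\,\nu = \tilde\nabla u + \nu^{n+1}\nu, \]
using the fact that the tangential projection of $\partial_{n+1}$ coincides with the intrinsic gradient of the restriction $u = X \cdot \partial_{n+1}$. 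Squaring and using $|\partial_{n+1}|^2 = 1$ gives $\tilde g^{kl} u_k u_l = |\tilde\nabla u|^2 = 1 - (\nu^{n+1})^2$.

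For the Hessian identities, the key observation is that $u$ and $x^\alpha$ are restrictions to $\Sigma$ of linear coordinate functions on $\mathbb{R}^{n+1}$, whose ambient Hessians vanish. The Gauss formula then reduces the intrinsic Hessian to the normal component, yielding
\[ \tilde\nabla_{ij} u = \tilde h_{ij}\,(\nu \cdot \partial_{n+1}) = \tilde h_{ij}\nu^{n+1}, \quad \tilde\nabla_{ij} x^\alpha = \tilde h_{ij}\nu^\alpha. \]
The first derivative of $\nu^{n+1}$ is obtained by differentiating $\nu^{n+1} = \nu \cdot \partial_{n+1}$ along $\tau_i$ and substituting the Weingarten relation $\partial_i \nu = -\tilde h_{ij}\tilde g^{jk}\tau_k$; dotting with $\partial_{n+1}$ and using $\tau_k \cdot \partial_{n+1} = u_k$ produces $(\nu^{n+1})_i = -\tilde h_{ij}\tilde g^{jk} u_k$.

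Only the last identity requires any genuine bookkeeping. I would start from the formula for $(\nu^{n+1})_i$ just derived, take one further covariant derivative $\tilde\nabla_j$, and apply the Leibniz rule together with the metric compatibility $\tilde\nabla \tilde g = 0$. The term containing $\tilde\nabla_j u_k$ is handled by inserting the Hessian identity $\tilde\nabla_{jk} u = \tilde h_{jk}\nu^{n+1}$, which produces the $\nu^{n+1}\tilde h_{il}\tilde h_{kj}$ contribution. The remaining term involves $\tilde\nabla_j \tilde h_{il}$, and here the Codazzi equation $\tilde\nabla_j \tilde h_{il} = \tilde\nabla_l \tilde h_{ij}$ (valid because $\mathbb{R}^{n+1}$ is flat) lets one reshuffle the indices against the symmetric $\tilde g^{kl}$ to obtain the stated $u_l \tilde\nabla_k \tilde h_{ij}$ term. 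The main point of care, and the only real obstacle, is tracking the Christoffel corrections when converting partial derivatives into the intrinsic connection $\tilde\nabla$; flatness of the ambient space guarantees that all curvature-type remainders vanish, leaving the symmetric closed form displayed in the lemma.
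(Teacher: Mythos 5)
Your proof is correct and is the standard argument one would give; note that the paper itself does not carry out the computation but simply cites \cite{Sui2019} for it. All five identities are handled soundly: the first two are immediate from $\tau_i(X)=\tau_i$; the norm identity follows from the orthogonal decomposition $\partial_{n+1}=\tilde\nabla u+\nu^{n+1}\nu$; the Hessian identities follow from the Gauss formula applied to restrictions of ambient linear functions; the Weingarten relation $D_{\tau_i}\nu=-\tilde h_{ij}\tilde g^{jk}\tau_k$ (with the sign convention $\mathrm{II}(\tau_i,\tau_j)=\tilde h_{ij}\nu$, which is the one the paper uses, as one checks against $\tilde h_{ij}=u_{ij}/w$, $\nu=(-Du,1)/w$) gives the formula for $(\nu^{n+1})_i$; and the final identity comes from differentiating that one once more, using $\tilde\nabla\tilde g=0$, the Hessian identity for $u$, and the Codazzi symmetry of $\tilde\nabla\tilde h$, exactly as you describe. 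Your last paragraph is slightly vague about ``Christoffel corrections,'' but the essential point you actually invoke --- flatness of $\mathbb{R}^{n+1}$ giving full Codazzi symmetry --- is precisely what is needed, so there is no gap.
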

\begin{proof}
The proof can be found in \cite{Sui2019}.
\end{proof}

\vspace{1mm}

For convenience, denote
\[ f_i = \frac{\partial f}{\partial \kappa_i} \quad \text{and} \quad F^{ij} = \frac{\partial F}{\partial a_{ij}}. \]

\begin{lemma}  \label{Lemma2-1}
Let $\Sigma$ be an admissible hypersurface in $\mathbb{H}^{n+1}$ satisfying
$f ( \kappa ) = \psi$.
Then in a local orthonormal frame on $\Sigma$, we have
\[
\begin{aligned}
F^{ij} \nabla_{ij} \nu^{n+1}
= & - \nu^{n+1} F^{ij} h_{ik} h_{kj} + \big( 1 + (\nu^{n+1})^2 \big) F^{ij} h_{ij} - \nu^{n+1} \sum f_i \\ & - \frac{2}{u^2} F^{ij} h_{jk} u_i u_k + \frac{2 \nu^{n+1}}{u^2} F^{ij} u_i u_j - \frac{u_k}{u} \psi_k.
\end{aligned}
\]
\end{lemma}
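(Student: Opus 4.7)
My plan is to reduce the hyperbolic computation to the Euclidean one via the conversion formula \eqref{eq2-10}, apply the Euclidean Simons-type identity from Lemma \ref{Lemma2-2}, and then translate back to hyperbolic quantities using \eqref{eq1-16} together with the differentiated equation $F(A[u]) = \psi(x,u)$.

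First I would fix a point $p \in \Sigma$ and work in a local frame $\{\tau_i\}$ that is orthonormal for the hyperbolic metric $g$ at $p$; then $\tilde g_{ij}=u^2\delta_{ij}$, $\tilde g^{ij}=u^{-2}\delta_{ij}$, and \eqref{eq1-16} specializes to $\tilde h_{ij}=u h_{ij}-u\nu^{n+1}\delta_{ij}$. Applying \eqref{eq2-10} with $w=\nu^{n+1}$ and contracting by the symmetric tensor $F^{ij}$ yields
\begin{equation*}
F^{ij}\nabla_{ij}\nu^{n+1} = F^{ij}\tilde\nabla_{ij}\nu^{n+1} + \frac{2}{u}F^{ij}u_i(\nu^{n+1})_j - \frac{u_k(\nu^{n+1})_k}{u}\sum F^{ii},
\end{equation*}
after using $\tilde g^{kl}u_l w_k = u^{-2}u_k w_k$ in our frame.

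Next I would substitute Lemma \ref{Lemma2-2}'s formulas $(\nu^{n+1})_i = -u^{-2}\tilde h_{ij}u_j$ and $\tilde\nabla_{ij}\nu^{n+1} = -u^{-2}\bigl(\nu^{n+1}\tilde h_{ik}\tilde h_{kj}+u_k\tilde\nabla_k\tilde h_{ij}\bigr)$, and then expand $\tilde h$ in terms of $h$ by \eqref{eq1-16}. The quadratic term $\tilde h_{ik}\tilde h_{kj}$ expands to $u^2 h_{ik}h_{kj}-2u^2\nu^{n+1}h_{ij}+u^2(\nu^{n+1})^2\delta_{ij}$, which (upon contraction with $-u^{-2}\nu^{n+1}F^{ij}$) produces $-\nu^{n+1}F^{ij}h_{ik}h_{kj}$, $2(\nu^{n+1})^2 F^{ij}h_{ij}$ and $-(\nu^{n+1})^3\sum F^{ii}$. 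The first-order corrections, combined with the identities $(\nu^{n+1})_i = -\frac{h_{ij}u_j}{u}+\frac{\nu^{n+1}u_i}{u}$ and $\tilde g^{kl}u_ku_l = 1-(\nu^{n+1})^2$, produce the terms $-\frac{2}{u^2}F^{ij}h_{jk}u_iu_k+\frac{2\nu^{n+1}}{u^2}F^{ij}u_iu_j$, along with pieces that cancel part of $-(\nu^{n+1})^3\sum F^{ii}$ and $(\nu^{n+1})^2 F^{ij}h_{ij}$, leaving the desired $-\nu^{n+1}\sum f_i$ and $\bigl(1+(\nu^{n+1})^2\bigr)F^{ij}h_{ij}$.

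The key step, and the main obstacle, is handling the Codazzi term $u^{-2}u_k\tilde\nabla_k\tilde h_{ij}$ contracted against $F^{ij}$. I would use the Euclidean Codazzi symmetry $\tilde\nabla_k\tilde h_{ij}=\tilde\nabla_i\tilde h_{kj}$, convert $\tilde\nabla\tilde h$ back to $\nabla h$ using \eqref{eq1-16} and \eqref{eq2-10}, and then invoke the differentiated equation $F^{ij}\nabla_k h_{ij} = \nabla_k\psi = \psi_{x_\alpha}(x^\alpha)_k + \psi_u u_k$, with $(x^\alpha)_k = \tau_k\cdot\partial_\alpha$ and $u_k=\tau_k\cdot\partial_{n+1}$ from Lemma \ref{Lemma2-2}. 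The contraction $u_k\nabla_k\psi = u_k\psi_k$ produces the term $-\frac{u_k}{u}\psi_k$, while the Christoffel-type corrections from \eqref{eq2-10} applied to $h_{ij}$ generate additional multiples of $F^{ij}h_{ij}$, $F^{ij}u_iu_j$ and $\sum f_i$ that must precisely match the residues from the earlier steps. The bookkeeping is delicate, but every term produced has a counterpart in the stated formula; tracking the coefficients carefully is what consumes the bulk of the proof.
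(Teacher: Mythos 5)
Your plan is the standard Euclidean-to-hyperbolic conversion that the paper (via its citation to \cite{Sui2019}) and its Lemma \ref{Lemma2-2} are set up for, and I checked that the bookkeeping you outlined indeed closes: the Codazzi term $-u^{-2}u_k F^{ij}\tilde\nabla_k\tilde h_{ij}$, after converting $\tilde\nabla\tilde h$ to $\nabla h$ via the Christoffel difference tensor and invoking $F^{ij}\nabla_k h_{ij}=\psi_k$, produces exactly $-\frac{u_k\psi_k}{u}-\big((\nu^{n+1})^2-1\big)F^{ij}h_{ij}-\frac{h_{kl}u_ku_l}{u^2}\sum F^{ii}$, which cancels the residual $\frac{h_{kl}u_ku_l}{u^2}\sum F^{ii}$ from the trace correction in \eqref{eq2-10} and combines with $2(\nu^{n+1})^2 F^{ij}h_{ij}$ and $-(\nu^{n+1})^3\sum F^{ii}-\nu^{n+1}(1-(\nu^{n+1})^2)\sum F^{ii}$ to give precisely $(1+(\nu^{n+1})^2)F^{ij}h_{ij}-\nu^{n+1}\sum f_i$. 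So the proposal is correct and takes essentially the same route as the paper.
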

\begin{proof}
The proof can be found in \cite{Sui2019}.
\end{proof}

\vspace{4mm}

\subsection{Pogorelov type interior curvature estimate}~

\vspace{2mm}

Note that on $\Omega_{\epsilon_0}^{\epsilon}$, in view of \eqref{eq2-12}, we have
\[ \nu^{n+1} = \frac{1}{\sqrt{1 + |D u|^2}} \geq 2 a > 0, \]
where $a$ is a positive constant depending on $\epsilon_0$ but independent of $\epsilon$.
Let $\kappa_{\max} (X)$ be the largest principal curvature of $\Sigma$ at $X$. Consider the test function
\begin{equation*}
M_0 = \sup\limits_{X \in\Sigma}
(v^2 - u^2 - c)^b \frac{\kappa_{\max}(X)}{{\nu}^{n+1} - a} e^{\frac{\beta}{u}},
\end{equation*}
where $b \geq 1$ and $\beta > 0$ are constants to be determined. Obviously, $M_0 > 0$ is attained at an interior point $X_0 \in \Sigma$.

Let $\tau_1, \ldots, \tau_n$ be a local orthonormal frame with respect to hyperbolic metric about
$X_0$ such that $h_{ij}(X_0) = \kappa_i \delta_{ij}$, where
$\kappa_1 \geq \ldots \geq \kappa_n$ are the principal curvatures of
$\Sigma$ at $X_0$. Then
\[ \ln h_{11} - \ln ( {\nu}^{n+1} - a ) + \frac{\beta}{u} + b \ln (v^2 - u^2 - c) \]
also achieves a
local maximum at $X_0$, at which, we have
\begin{equation} \label{eq2-16}
\frac{h_{11i}}{h_{11}} - \frac{\nabla_i \nu^{n + 1}}{\nu^{ n + 1 } - a} - \beta \frac{u_i}{u^2} + b \frac{(v^2 - u^2)_i}{v^2 - u^2 - c} = 0,
\end{equation}
\begin{equation} \label{eq2-17}
\begin{aligned}
& \frac{F^{ii} h_{11ii}}{h_{11}} - \frac{F^{ii} h_{11i}^2}{h_{11}^2} - \frac{F^{ii} \nabla_{ii} \nu^{n + 1}}{\nu^{n + 1} - a} + \frac{F^{ii} (\nu^{n + 1})_i^2}{(\nu^{n + 1} - a)^2} \\
&- \beta \frac{F^{ii} \nabla_{ii} u}{u^2} + \beta F^{ii} \frac{2 u_i^2}{u^3} + b \frac{F^{ii} \nabla_{ii} (v^2 - u^2)}{v^2 - u^2 - c} - b F^{ii} \frac{( v^2 - u^2)_i^2}{( v^2 - u^2 - c )^2} \leq 0.
\end{aligned}
\end{equation}

Note that $v^2 = R^2 - |x|^2$. By \eqref{eq2-10}, Lemma \ref{Lemma2-2} and Lemma \ref{Lemma2-1}, we have
\begin{equation} \label{eq2-24}
F^{ii} \nabla_{ii} u =  u \nu^{n+1} \psi + \frac{2}{u} \sum f_i u_i^2 - u \sum f_i,
\end{equation}
\begin{equation} \label{eq2-11}
\begin{aligned}
& F^{ii} \nabla_{ii} (v^2 - u^2) =  F^{ii} \nabla_{ii} (R^2 - |X|^2) \\
= & F^{ii} \Big( - \tilde{\nabla}_{ii} |X|^2  - \frac{2 u_i}{u} (|X|^2)_i + \sum_{k = 1}^n \frac{u_k}{u} (|X|^2)_k  \Big) \\
= & F^{ii} \Big( - (2 X \cdot \tau_i)_i + \tilde{\Gamma}_{ii}^k 2 X \cdot \tau_k   - \frac{2 u_i}{u} (2 X \cdot \tau_i) + \sum_{k = 1}^n \frac{u_k}{u} (2 X \cdot \tau_k) \Big) \\
=  & F^{ii} \Big( - 2 u^2 - 2 X \cdot \nu \tilde{h}_{ii} - \frac{2 u_i}{u} (2 X \cdot \tau_i) + \sum_{k = 1}^n \frac{u_k}{u} (2 X \cdot \tau_k) \Big) \\
=  & F^{ii} \Big( - 2 u^2 - 2 X \cdot \nu (\kappa_i - \nu^{n + 1}) u - \frac{2 u_i}{u} (2 X \cdot \tau_i) + \sum_{k = 1}^n \frac{u_k}{u} (2 X \cdot \tau_k) \Big) \\
= &  F^{ii} \bigg( - 2 u^2 + 2 u (X \cdot \nu) (\nu \cdot \partial_{n + 1})- \frac{4 u_i}{u} (X \cdot \tau_i) \\
&  + 2 u \sum_{k = 1}^n \Big( \frac{\tau_k}{u} \cdot \partial_{n + 1} \Big) \Big( X \cdot \frac{\tau_k}{u} \Big) \bigg) - 2 (X \cdot \nu) u \sum f_i \kappa_i \\
=  &  - 2 (X \cdot \nu)  u \psi - 4 f_i \frac{u_i}{u} (X \cdot \tau_i),
\end{aligned}
\end{equation}
and
\begin{equation} \label{eq2-13}
\begin{aligned}
F^{ii} \nabla_{ii} \nu^{n+1}
= & - \nu^{n+1} \sum f_i \kappa_i^2 + \big( 1 + (\nu^{n+1})^2 \big) \psi - \nu^{n+1} \sum f_i \\ & -  \sum \frac{2 f_i \kappa_i u_i^2}{u^2} + \sum \frac{2 \nu^{n+1} f_i u_i^2}{u^2}  - \frac{u_k}{u} \psi_k.
\end{aligned}
\end{equation}
Also, by Gauss equation, we have the following commutation formula,
\begin{equation} \label{eq2-8}
h_{ii11} = h_{11ii} + ( \kappa_i \kappa_1 - 1 )( \kappa_i - \kappa_1 ).
\end{equation}
Differentiating equation \eqref{eq2-1} twice, we obtain
\begin{equation}  \label{eq2-9}
F^{ii} h_{ii11} + F^{ij, rs} h_{ij1} h_{rs1} = \psi_{11}  \geq  - C \kappa_1.
\end{equation}
Taking \eqref{eq2-24}--\eqref{eq2-9} into \eqref{eq2-17} yields,
\begin{equation} \label{eq2-18}
\begin{aligned}
& \Big( \kappa_1 - \frac{1 + (\nu^{n + 1})^2}{\nu^{n + 1} - a} - \frac{\beta \nu^{n + 1}}{u} - \frac{2 b (X \cdot \nu) u}{v^2 - u^2 - c} \Big) \psi - C \\
 & + \Big( \frac{\beta}{u} + \frac{a}{\nu^{n+1} - a} \Big) \sum f_i
 + \frac{a}{\nu^{n+1} - a} \sum f_i \kappa_i^2  \\
 & + \frac{2}{\nu^{n+1} - a}  \sum f_i \kappa_i \frac{u_i^2}{u^2}
- \frac{2 \nu^{n+1}}{\nu^{n+1} - a} \sum f_i \frac{u_i^2}{u^2} \\
& - b \sum f_i \frac{(v^2 - u ^2)_i^2}{(v^2 - u^2 - c)^2} - \frac{4 b f_i u_i (X \cdot \tau_i)}{u (v^2 - u^2 - c)} \\
& - \frac{F^{ij, rs} h_{ij1} h_{rs1}}{\kappa_1} - \frac{F^{ii} h_{11i}^2}{\kappa_1^2} + \frac{F^{ii} (\nu^{n+1})_i^2}{(\nu^{n+1} - a)^2} \leq 0.
\end{aligned}
\end{equation}
By Lemma \ref{Lemma2-2} and Cauchy-Schwartz inequality, we have
\begin{equation*}
\begin{aligned}
 & \frac{2}{\nu^{n+1} - a}  \sum f_i \kappa_i \frac{u_i^2}{u^2} -  \frac{2 \nu^{n+1}}{\nu^{n+1} - a}  \sum f_i \frac{u_i^2}{u^2} \\
\geq & - \frac{2}{\nu^{n+1} - a}  \sum f_i |\kappa_i| - \frac{2}{\nu^{n+1} - a}  \sum f_i \\
\geq & -  \frac{4}{ a (\nu^{n+1} - a)}  \sum f_i  - \frac{a}{4 (\nu^{n+1} - a)} \sum f_i \kappa_i^2 - \frac{2}{\nu^{n+1} - a} \sum f_i.
\end{aligned}
\end{equation*}
Hence, \eqref{eq2-18} reduces to
\begin{equation} \label{eq2-18-1}
\begin{aligned}
& \Big( \kappa_1 - \frac{1 + (\nu^{n + 1})^2}{\nu^{n + 1} - a} - \frac{\beta \nu^{n + 1}}{u} - \frac{2 b (X \cdot \nu) u}{v^2 - u^2 - c} \Big) \psi - C \\
 & + \Big( \frac{\beta}{u} + \frac{a - 2 - 4 a^{- 1}}{\nu^{n+1} - a} \Big) \sum f_i
 + \frac{3 a}{4(\nu^{n+1} - a)} \sum f_i \kappa_i^2  \\
& - b \sum f_i \frac{(v^2 - u ^2)_i^2}{(v^2 - u^2 - c)^2} - \frac{4 b f_i u_i (X \cdot \tau_i)}{u (v^2 - u^2 - c)} \\
& - \frac{F^{ij, rs} h_{ij1} h_{rs1}}{\kappa_1} - \frac{F^{ii} h_{11i}^2}{\kappa_1^2} + \frac{F^{ii} (\nu^{n+1})_i^2}{(\nu^{n+1} - a)^2} \leq 0.
\end{aligned}
\end{equation}

Let $\theta \in (0, 1)$ be a constant which will be specified later. We divide our discussion into two cases.

{\bf Case (i)}. Assume that $\kappa_n \leq - \theta \kappa_1$.
By \eqref{eq2-16} and Cauchy-Schwartz inequality, we have
\begin{equation} \label{eq2-20}
\begin{aligned}
& \frac{F^{ii} h_{11i}^2}{\kappa_1^2} \leq (1 + \delta_1) f_i \frac{(\nu^{n + 1})_i^2}{(\nu^{n + 1} - a)^2} \\
 & + 2 (1 + \delta_1^{- 1}) \beta^2 f_i \frac{u_i^2}{u^4} + 2 (1 + \delta_1^{- 1}) b^2 f_i \frac{(v^2 - u^2)_i^2}{(v^2 - u^2 - c)^2},
\end{aligned}
\end{equation}
where $\delta_1$ is a positive constant to be determined later.
Also, we have
\begin{equation} \label{eq2-21}
- \frac{4 b f_i u_i (X \cdot \tau_i)}{u (v^2 - u^2 - c)} = \frac{2 b f_i u_i (v^2 - u^2)_i}{u (v^2 - u^2 - c)}
\geq  - \sum f_i - b^2 f_i \frac{ (v^2 - u^2)_i^2}{(v^2 - u^2 - c)^2}.
\end{equation}
Taking \eqref{eq2-20} and \eqref{eq2-21} into \eqref{eq2-18-1} and noting that
$- F^{ij, rs} h_{ij1} h_{rs1} \geq 0$,  we obtain
\begin{equation} \label{eq2-22}
\begin{aligned}
& \Big( \kappa_1  - \frac{1 + (\nu^{n + 1})^2}{\nu^{n + 1} - a} - \frac{\beta \nu^{n + 1}}{u} - \frac{2 b (X \cdot \nu) u}{v^2 - u^2 - c} \Big) \psi - C \\
& + \Big( \frac{\beta}{u} + \frac{a - 2 - 4 a^{- 1}}{\nu^{n + 1} - a} - 1
- 2 (1 + \delta_1^{- 1}) \frac{\beta^2}{u^2} - \frac{ C b + C (3 + 2 \delta_1^{- 1}) b^2 }{(v^2 - u^2 - c)^2} \Big) \sum f_i \\
& + \frac{3 a}{4 (\nu^{n + 1} - a)}  \sum f_i \kappa_i^2
 - \delta_1 \frac{F^{ii} (\nu^{n+1})_i^2}{(\nu^{n+1} - a)^2} \leq 0.
\end{aligned}
\end{equation}
By Lemma \ref{Lemma2-2} and \eqref{eq1-16}, we know that
\[ (\nu^{n + 1})_i = \frac{u_i}{u} (\nu^{n + 1} - \kappa_i), \]
and consequently,
\begin{equation} \label{eq2-31}
\frac{F^{ii} (\nu^{n+1})_i^2}{(\nu^{n+1} - a)^2} \leq \frac{2}{(\nu^{n + 1} - a)^2} \sum f_i \kappa_i^2 + \frac{2}{(\nu^{n + 1} - a)^2} \sum f_i.
\end{equation}
Taking \eqref{eq2-31} into \eqref{eq2-22} and choosing $\delta_1 = \frac{a^2}{8}$, we have
\begin{equation*}
\begin{aligned}
& \Big( \kappa_1  - \frac{1 + (\nu^{n + 1})^2}{\nu^{n + 1} - a} - \frac{\beta \nu^{n + 1}}{u} - \frac{2 b (X \cdot \nu) u}{v^2 - u^2 - c} \Big) \psi - C  +  \frac{a}{2 (\nu^{n + 1} - a)}  \sum f_i \kappa_i^2  \\
& + \Big( \frac{\beta}{u} + \frac{\frac{3}{4} a - 2 - \frac{4}{a}}{\nu^{n + 1} - a} - 1 - \frac{2 (1 + 8 a^{- 2}) \beta^2}{u^2} - C \frac{ b + (3 + 16 a^{- 2}) b^2 }{(v^2 - u^2 - c)^2} \Big) \sum f_i
 \leq 0.
\end{aligned}
\end{equation*}
Since
\[ \sum f_i \kappa_i^2 \geq f_n \kappa_n^2 \geq \frac{\theta^2}{n} \kappa_1^2 \sum f_i, \]
we obtain an upper bound for $(v^2 - u^2 - c) \kappa_1$.

\vspace{2mm}

{\bf Case (ii)}. Assume that $\kappa_n > - \theta \kappa_1$.
Denote
\[ I = \{  i \,| \, f_1 \geq \theta^2 f_i  \},\quad  J = \{  i \,| \, f_1 < \theta^2 f_i  \}. \]
By \eqref{eq2-16} and Cauchy-Schwartz inequality, we have
\begin{equation} \label{eq2-26}
\begin{aligned}
 & \sum f_i \frac{(v^2 - u ^2)_i^2}{(v^2 - u^2 - c)^2} \leq  \sum_{j \in I} f_j \frac{(v^2 - u ^2)_j^2}{(v^2 - u^2 - c)^2} \\
 & + \frac{2}{b^2} \sum_{j \in J} f_j \Big( \frac{(\nu^{n + 1})_j^2}{(\nu^{n + 1} - a)^2} + \frac{2 h_{11j}^2}{h_{11}^2} + 2 \beta^2 \frac{u_j^2}{u^4} \Big),
\end{aligned}
\end{equation}
\begin{equation} \label{eq2-27}
\begin{aligned}
& \frac{F^{ii} h_{11i}^2}{h_{11}^2} \leq (1 + \delta_2) \sum_{j \in I} f_j \frac{(\nu^{n + 1})_j^2}{(\nu^{n + 1} - a)^2}  + 2 (1 + \delta_2^{- 1}) \beta^2 \sum_{j \in I} f_j \frac{u_j^2}{u^4} \\
& + 2 (1 + \delta_2^{- 1}) b^2 \sum_{j \in I} f_j \frac{(v^2 - u^2)_j^2}{(v^2 - u^2 - c)^2} + \sum_{j \in J} \frac{F^{jj} h_{11j}^2}{h_{11}^2},
\end{aligned}
\end{equation}
and
\begin{equation} \label{eq2-23}
\begin{aligned}
& - \frac{4 b f_i u_i (X \cdot \tau_i)}{u (v^2 - u^2 - c)} = \frac{2 b f_i u_i (v^2 - u^2)_i}{u (v^2 - u^2 - c)} \\
\geq &  \sum_{j \in I} \frac{2 b f_j u_j (v^2 - u^2)_j}{u (v^2 - u^2 - c)}  + \sum_{j \in J} \frac{2 f_j u_j}{u} \Big( - \frac{h_{11j}}{h_{11}} + \frac{(\nu^{n + 1})_j}{\nu^{n + 1} - a} \Big) \\
\geq & - \sum_{j \in I} f_j - b^2 \sum_{j \in I} f_j \frac{ (v^2 - u^2)_j^2}{(v^2 - u^2 - c)^2} \\
& - \delta_2^{-1} \sum_{j \in J} f_j - 2 \delta_2 \sum_{j \in J} f_j \Big( \frac{h_{11j}^2}{h_{11}^2} + \frac{(\nu^{n + 1})_j^2}{(\nu^{n + 1} - a)^2} \Big).
\end{aligned}
\end{equation}
Taking \eqref{eq2-26}--\eqref{eq2-23} into \eqref{eq2-18-1}, we obtain
\begin{equation} \label{eq2-28}
\begin{aligned}
& \Big( \kappa_1 - \frac{1 + (\nu^{n + 1})^2}{\nu^{n + 1} - a} - \frac{\beta \nu^{n + 1}}{u} - \frac{2 b (X \cdot \nu) u}{v^2 - u^2 - c} \Big) \psi - C \\
 & + \Big( \frac{\beta}{u} + \frac{a - 2 - 4 a^{- 1}}{\nu^{n+1} - a} - 1 - \delta_2^{-1} \Big) \sum f_i
  + \frac{3 a}{4 (\nu^{n+1} - a)}  \sum f_i \kappa_i^2     \\
&    - \Big( b + b^2 + 2 (1 + \delta_2^{- 1}) b^2 \Big) \sum_{j \in I} f_j \frac{(v^2 - u^2)_j^2}{(v^2 - u^2 - c)^2} \\
& - ( 2 b^{- 1}  + 3 \delta_2 ) \frac{F^{ii} (\nu^{n+1})_i^2}{(\nu^{n+1} - a)^2}
  - \frac{4 \beta^2}{b} \sum_{j \in J} f_j \frac{u_j^2}{u^4}  - 2 (1 + \delta_2^{- 1}) \beta^2 \sum_{j \in I} f_j \frac{u_j^2}{u^4} \\
& - \frac{F^{ij, rs} h_{ij1} h_{rs1}}{\kappa_1}  - ( 1 + 4 b^{- 1} + 2 \delta_2 ) \sum_{j \in J} \frac{f_j h_{11j}^2}{h_{11}^2}   \leq 0.
\end{aligned}
\end{equation}
Applying \eqref{eq2-31} and choosing $\delta_2  = \frac{a^2}{16}$, \eqref{eq2-28} becomes
\begin{equation} \label{eq2-32}
\begin{aligned}
& \Big( \kappa_1 - \frac{1 + (\nu^{n + 1})^2}{\nu^{n + 1} - a} - \frac{\beta \nu^{n + 1}}{u} - \frac{2 b (X \cdot \nu) u}{v^2 - u^2 - c} \Big) \psi - C \\
 & + \Big( \frac{\beta}{u} + \frac{\frac{5 a}{8} - 2 - 4 a^{- 1}}{\nu^{n+1} - a} - 1 - \frac{16}{a^2} - \frac{4}{b (\nu^{n + 1} - a)^2} - \frac{4 \beta^2}{b u^2} \Big) \sum f_i
 \\
 &  + \Big( \frac{ 3 a }{8 (\nu^{n+1} - a)} - \frac{4}{b (\nu^{n + 1} - a)^2} \Big) \sum f_i \kappa_i^2 - \frac{ C \big( b +  (3 + 32 a^{- 2} ) b^2 \big)}{\theta^2 (v^2 - u^2 - c)^2} f_1   \\
&    - \frac{2 (1 + 16 a^{- 2}) \beta^2}{\theta^2 u^2} f_1
 - \frac{F^{ij, rs} h_{ij1} h_{rs1}}{\kappa_1}  - \Big( 1 + \frac{4}{b} + \frac{a^2}{8}  \Big) \sum_{j \in J} \frac{f_j h_{11j}^2}{h_{11}^2}   \leq 0.
\end{aligned}
\end{equation}
By an inequality of Andrews \cite{And} and Gerhardt \cite{Ger}, we have
\[ \begin{aligned}
& - \frac{F^{ij, rs} h_{ij1} h_{rs1}}{\kappa_1}
\geq   \frac{2}{\kappa_1} \sum_{j \in J} \frac{f_j - f_1}{\kappa_1 - \kappa_j} h_{11j}^2 \geq 2 (1 - \theta) \sum_{j \in J} \frac{f_j h_{11j}^2}{h_{11}^2}.
\end{aligned} \]
Choosing $\theta = \frac{1}{4}$, we have
\[
 - \frac{F^{ij, rs} h_{ij1} h_{rs1}}{\kappa_1}  - \Big( 1 + \frac{4}{b} + \frac{a^2}{8}  \Big) \sum_{j \in J} \frac{f_j h_{11j}^2}{h_{11}^2} \\
\geq  \Big( \frac{1}{2} - \frac{4}{b} - \frac{1}{8}  \Big) \sum_{j \in J} \frac{f_j h_{11j}^2}{h_{11}^2}.
 \]
Also, choosing $\beta$ sufficiently large, \eqref{eq2-32} reduces to
\begin{equation*}
\begin{aligned}
& \Big( \kappa_1 - \frac{1 + (\nu^{n + 1})^2}{\nu^{n + 1} - a} - \frac{\beta \nu^{n + 1}}{u} - \frac{2 b (X \cdot \nu) u}{v^2 - u^2 - c} \Big) \psi - C \\
 & + \Big( \frac{\beta}{2 u} - \frac{4}{b (\nu^{n + 1} - a)^2}  - \frac{4 \beta^2}{b u^2} \Big) \sum f_i
   + \Big( \frac{ 3 a }{8 (\nu^{n+1} - a)} - \frac{ 4 }{b (\nu^{n + 1} - a)^2} \Big) \sum f_i \kappa_i^2 \\
&  - \frac{ C \big( b +  (3 + 32 a^{- 2} ) b^2 \big)}{(v^2 - u^2 - c)^2} f_1  - \frac{C (1 + 16 a^{- 2}) \beta^2}{u^2} f_1
  + \Big( \frac{3}{8} - \frac{4}{b} \Big) \sum_{j \in J} \frac{f_j h_{11j}^2}{h_{11}^2}  \leq 0.
\end{aligned}
\end{equation*}
Then choosing $b$ sufficiently large, we have
\begin{equation*}
\begin{aligned}
& \Big( \kappa_1 - \frac{1 + (\nu^{n + 1})^2}{\nu^{n + 1} - a} - \frac{\beta \nu^{n + 1}}{u} - \frac{2 b (X \cdot \nu) u}{v^2 - u^2 - c} \Big) \psi - C
 \\
 &  + \frac{ 3 a }{16 (\nu^{n+1} - a)}   f_1 \kappa_1^2
  - \frac{ C \big( b +  (3 + 32 a^{- 2} ) b^2 \big)}{ (v^2 - u^2 - c)^2} f_1  - \frac{C (1 + 16 a^{- 2}) \beta^2}{u^2} f_1 \leq 0.
\end{aligned}
\end{equation*}
We thus obtain an upper bound for $(v^2 - u^2 - c) \kappa_1$.

Now we have finished the proof of \eqref{eq2-7}. In view of \eqref{eq2-14} and \eqref{eq2-12}, we can deduce that
\begin{equation} \label{eq2-33}
\max\limits_{\overline{\Omega_{\epsilon_0}}} \big| \kappa_i [ u^{\epsilon}] \big| \,\leq \, C, \quad \forall \,\, i = 1, \ldots, n, \quad \forall \,\, 0 < \epsilon < \frac{\delta_{\epsilon_0}}{2}.
\end{equation}

\vspace{2mm}

\subsection{Uniqueness}~

\vspace{2mm}

Under assumption \eqref{eqn12}, we can prove the following uniqueness result.

\begin{thm}
Under assumption \eqref{eqn12}, the admissible solution $u \in C^{2}(\Omega) \cap C^0(\overline{\Omega})$  to  asymptotic Plateau problem \eqref{eqn9} is unique.
\end{thm}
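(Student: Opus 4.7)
The plan is to use a direct maximum-principle comparison that exploits the $1$-homogeneity of $f = \sigma_k^{1/k}$, essentially the same trick used earlier in the excerpt to prove $\underline{u}<v$. Suppose for contradiction that $u_1, u_2 \in C^2(\Omega)\cap C^0(\overline{\Omega})$ are two admissible solutions with $u_1 \not\equiv u_2$. Since $u_1 = u_2 = 0$ on $\Gamma$ and both are continuous on $\overline{\Omega}$, after relabelling I may assume that $u_1 - u_2$ attains a strictly positive maximum at an interior point $x_0 \in \Omega$. At this point $u_1(x_0) > u_2(x_0) > 0$, $Du_1(x_0) = Du_2(x_0)$, and $D^2 u_1(x_0) \leq D^2 u_2(x_0)$ as symmetric matrices.

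The key step is to rewrite the two equations, using the $1$-homogeneity of $f$ and the formula $A[u]_{ij}/u = \delta_{ij}/(uw) + \gamma^{ik}u_{kl}\gamma^{lj}/w$, in the form $f(A[u_j]/u_j) = \psi(x, u_j)/u_j$. At $x_0$ the gradients agree, so $w$ and $\gamma^{ik}$ coincide for the two graphs, and a direct subtraction gives
\[ \frac{A[u_2]_{ij}}{u_2} - \frac{A[u_1]_{ij}}{u_1} = \Big(\tfrac{1}{u_2} - \tfrac{1}{u_1}\Big)\frac{\delta_{ij}}{w} + \frac{1}{w}\gamma^{ik}(u_{2,kl}-u_{1,kl})\gamma^{lj}. \]
The first term is strictly positive definite because $u_1(x_0) > u_2(x_0) > 0$, and the second is positive semidefinite because $D^2 u_2 - D^2 u_1 \geq 0$ and $\gamma$ is positive definite. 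Hence the matrix difference is strictly positive definite. Since both matrices have eigenvalues in the cone $\Gamma_k$ (this cone is invariant under positive scaling) and $\sigma_k^{1/k}$ is strictly increasing in each coordinate on $\Gamma_k$, the strict matrix inequality yields
\[ \frac{\psi(x_0, u_1(x_0))}{u_1(x_0)} = f\!\left(\frac{A[u_1]}{u_1}\right)\!(x_0) < f\!\left(\frac{A[u_2]}{u_2}\right)\!(x_0) = \frac{\psi(x_0, u_2(x_0))}{u_2(x_0)}. \]

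On the other hand, hypothesis \eqref{eqn12} is exactly the statement that $\partial_u(\psi/u) = (u\psi_u - \psi)/u^2 \geq 0$, so $s \mapsto \psi(x_0, s)/s$ is nondecreasing on $(0,\infty)$. Combined with $u_1(x_0) > u_2(x_0)$, this gives $\psi(x_0, u_1)/u_1 \geq \psi(x_0, u_2)/u_2$, contradicting the strict inequality just derived. Therefore $u_1 \equiv u_2$.

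I do not foresee any essential obstacle here; the only subtlety is the passage from the strict symmetric-matrix inequality $A < B$ to $F(A) < F(B)$, which follows from the min–max characterization of eigenvalues together with strict monotonicity of $\sigma_k^{1/k}$ in each coordinate on $\Gamma_k$. No boundary regularity beyond continuity on $\overline{\Omega}$ is needed, since the argument is purely interior, and condition \eqref{eqn12} enters only as the sign $(\psi/u)_u \geq 0$.
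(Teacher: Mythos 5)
Your proof is correct, and it reaches the conclusion by a more elementary route than the paper's. The paper actually proves the stronger comparison principle that a solution $u$ dominates any subsolution $v$, and does so via a Guan--Spruck-type deformation: it sets $u[s]=sv+(1-s)u$, verifies admissibility of $u[s]$ near the interior contact point $x_0$, and applies a Rolle-type argument to $a(s)=G\bigl[u[s]\bigr](x_0)-\psi\bigl(x_0,u[s](x_0)\bigr)$, using the structural identities $G^{ij}=\frac{u}{w}F^{kl}\gamma^{ki}\gamma^{jl}$ and $G_u=\frac{1}{u}\bigl(\psi-\frac{1}{w}\sum f_i\bigr)$ to rule out $a'(s_0)\geq 0$. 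You bypass the deformation entirely by using $1$-homogeneity to normalize the equation to $f(A[u]/u)=\psi(x,u)/u$ and then comparing the two normalized coefficient matrices directly at $x_0$; this is the scalar analogue of the earlier matrix-inequality argument the paper uses to show $\underline{u}<v$, applied now to two solutions. Both proofs ultimately rest on the same two ingredients — the extra term $\delta_{ij}/(uw)$ produced by homogeneity in $A[u]/u$, and condition \eqref{eqn12}, which says $\psi/u$ is nondecreasing — so neither is materially more general than the other; your version is simply shorter, and it yields the full solution-dominates-subsolution comparison with the obvious relaxation of the equality to an inequality. The one step you flag as subtle, $A_1<A_2 \Rightarrow F(A_1)<F(A_2)$ for matrices with eigenvalues in $\Gamma_k$, does deserve an explicit line: $A_1<A_2$ gives $\lambda_i(A_1)<\lambda_i(A_2)$ for each ordered eigenvalue by Weyl's inequality, and since $\Gamma_k$ is an open convex cone on which $\sigma_k^{1/k}$ is strictly increasing in each coordinate, the componentwise segment from $\lambda(A_1)$ to $\lambda(A_2)$ stays in $\Gamma_k$ and $F$ strictly increases along it.
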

\begin{proof}
Let $u$ and $v$ be respectively an admissible solution and subsolution to
\eqref{eqn9} in $\Omega$.
We claim that $u \geq v$ in $\Omega$.

Suppose not. Then $v - u$ achieves a positive maximum at $x_0 \in \Omega$. Consequently, we have
\[  v(x_0) > u(x_0),\quad D v(x_0) = D u(x_0), \quad D^2 v (x_0) \leq D^2 u(x_0). \]
Now we consider the deformation $u[s] = s v + (1 - s) u$ for $s \in [0, 1]$. By direct calculation, we can verify that $u[s]$ is admissible near $x_0$ for any $s \in [0, 1]$.
This is because at $x_0$,
\[ \begin{aligned}
   & \delta_{ij} + u[s]  {\gamma}^{ik}  ( u[s] )_{kl}  \gamma^{lj}
  \geq  \delta_{ij} + u[s] {\gamma}^{ik}  v_{kl} \gamma^{lj}  \\
   = & (1 - s) \Big( 1 - \frac{u}{v}\Big) \delta_{ij} + \frac{u[s]}{v} \Big( \delta_{ij} + v  \gamma^{ik} v_{kl} \gamma^{lj} \Big).
  \end{aligned}
\]

Note that equation \eqref{eqn9} can be rewritten as
\[ f (\kappa[u]) = F (A [u]) =  G (D^2 u, D u, u) = G[u] = \psi (x, u). \]
Denote
\[ G^{ij} = \frac{\partial G}{\partial u_{ij}},  \quad G^i = \frac{\partial G}{\partial u_i}, \quad G_u = \frac{\partial G}{\partial u}. \]
Now we can define a differentiable function for $s \in [0, 1]$,
\[ a(s) = G ( D^2 u[s], D u[s], u[s] ) (x_0) - \psi ( x_0, u[s](x_0) ). \]
Since $a(0) = 0$ and $a(1) \geq 0$, there exists $s_0 \in [0, 1]$ such that $a(s_0) = 0$ and $a'(s_0) \geq 0$, or equivalently,
\[
 G \big[  u[s_0] \big] (x_0) = \psi (x_0, u[s_0](x_0) ), \]
and
\begin{equation} \label{eq2-30}
\begin{aligned}
& G^{ij} \big[  u[s_0] \big]  (x_0)  D_{ij}  (v - u)(x_0)
 + G^i \big[  u[s_0] \big] (x_0)  D_i  (v - u)(x_0)
 \\ & +  \big(G_u \big[  u[s_0] \big] (x_0) - \psi_u \big( x_0, u[s_0] (x_0) \big) \big)  (v - u)(x_0) \geq 0.
\end{aligned}
\end{equation}
Also note that
\[ G^{ij} = \frac{u}{ w} F^{kl} \gamma^{ki} \gamma^{j l}, \quad
G_u  = \frac{1}{u} \big( \psi - \frac{1}{w} \sum f_i \big). \]
Thus,
\[ G^{ij} \big[  u[s_0] \big]  (x_0)  D_{ij}  (v - u)(x_0) \leq 0 \]
and
\[
G_u \big[ u[s_0] \big](x_0) - \psi_u ( x_0, u[s_0] (x_0) ) < 0
\]
by assumption \eqref{eqn12}. But then \eqref{eq2-30} can not hold. Thus, we proved the claim. Consequently, we obtain the uniqueness.
\end{proof}

\vspace{4mm}

\end{document}